\documentclass[final]{siamltex}

\usepackage{graphicx,pgf,amsfonts,amssymb,amsmath,eepic}
\usepackage{theorem}
\usepackage{hyperref}

\title{A Proximal Primal-Dual Approach to Generalized JKO Schemes for Doubly Nonlinear Parabolic Equations}

\author{%
  Luis M. Brice\~no-Arias\thanks{%
    Departamento de Matem\'atica, Universidad T\'ecnica Federico Santa Mar\'ia,
    Santiago, Chile
(\email{luis.briceno@usm.cl}).}%
  \and
  Jos\'e A.~Carrillo\thanks{%
    Mathematical Institute, University of Oxford,
    Oxford OX2 6GG, United Kingdom(\email{jose.carrillo@maths.ox.ac.uk}).}%
  \and
  Dante Kalise\thanks{%
    Department of Mathematics, Imperial College London,
    South Kensington Campus, London SW7 2AZ, United Kingdom
    (\email{dkaliseb@ic.ac.uk}).}%
  \and
  Francisco J.~Silva\thanks{%
XLIM, Universit\'e de Limoges, 87060 Limoges, France
(\email{francisco.silva@unilim.fr}).}%
  \and
  Li Wang\thanks{%
    School of Mathematics, University of Minnesota,
 Minneapolis, MN 55455, United States
(\email{liwang@umn.edu}).}%
}

\newcommand{\RP}{\ensuremath{\left[0,+\infty\right[}}

\newcommand{\RPP}{\ensuremath{\left]0,+\infty\right[}}

\newcommand{\RPX}{\ensuremath{\left[0,+\infty\right]}}
\newcommand{\RX}{\ensuremath{\left]-\infty,+\infty\right]}}

\newcommand{\inte}{\ensuremath{\text{\rm int\,}}}
\newcommand{\RR}{{\mathbb R}}
\newcommand{\R}{{\mathbb R}}

\newcommand{\NN}{{\mathbb N}}

\newcommand{\dd}{\;{\rm d}}

\newcommand{\PP}{\mathcal P}
\newcommand{\I}{\mathcal I}

\newcommand{\un}{{\rm 1\kern -2.5pt l}}
\newcommand{\id}{{\rm Id}}
\newcommand{\email}[1]{{\small E-mail: {\textsf {#1}}}}

\def\derpar#1#2{\frac{\partial#1}{\partial#2}}

\def\thefigure{{\arabic{section}.\arabic{figure}}}

\newcommand{\menge}[2]{\big\{{#1}~\big |~{#2}\big\}} 

\newcommand{\dom}{\ensuremath{\text{\rm dom}\,}}

\newcommand{\deltaxbf}{\Delta {\boldsymbol{x}}}
\newcommand{\Dtime}{1}
\newcommand{\prox}{\text{\rm Prox}}

\newcommand{\Amat}{\mathsf{A}}

\newcommand{\rd}{\mathrm{d}}

\newcommand{\C}{\mathcal{C}}

\theoremstyle{plain}{\theorembodyfont{\rmfamily}%
}
\theoremstyle{plain}{\theorembodyfont{\em\rmfamily}%
\newtheorem{prop}[theorem]{Proposition}}
\theoremstyle{plain}{\theorembodyfont{\em\rmfamily}%
\newtheorem{remark}[theorem]{Remark}}

\begin{document}

\maketitle

\begin{abstract}
Variational methods based on optimization strategies are proposed to numerically solve a large family of nonlinear partial differential equations. They are all particular instances of gradient flows with general costs, including the $p$-Laplace equation and flux-limited equations such as the relativistic heat equation. This is achieved by computing explicit formulas for proximal operators with general costs amenable to efficient numerical approximation. We showcase our numerical approach via validation of the results by recovering the qualitative behavior of particular known cases of this large family of steepest descent evolutions.
\end{abstract}

\begin{keywords}
optimal transport with general costs, gradient flows, JKO scheme, primal-dual methods,  proximal operators, $p$-Laplace and relativistic heat equations
\end{keywords}

\begin{AMS}
35A15, 47J25, 47J35, 49M29, 65K10, 82B21
\end{AMS}

\section{Introduction}
This work develops a novel convex optimization scheme to solve a family of doubly nonlinear parabolic equations that arise as gradient flows with general cost functions. We study equations of the form 
\begin{equation} 
\label{contgen}
\derpar{\rho}{t} = \nabla\cdot \left(
\rho\nabla c^*\left[ \nabla  U' ( \rho ) \right]\right),
\end{equation}
where $c\colon\RR^{d}\to\RPX$ is an even convex cost function with $c(0)=0$, assumed differentiable in the interior of its domain, $c^*$ denotes its Fenchel conjugate, $\rho(t,\cdot)$ represents a time-dependent probability measure on $\R^d$ ($d\geq 1$), and $U\colon\left[0,+\infty\right[\to \R$ is a density of internal energy.

The velocity field in \eqref{contgen} has the form $-\nabla c^*\left[ \nabla \frac{\delta {\cal F}}{\delta \rho}\right]$, where $\frac{\delta {\cal F}}{\delta \rho}=U' \left ( \rho \right )$ and ${\cal F}(\rho) = \int_{\R^d} U(\rho)\dd x$ represents the free-energy functional. This functional exhibits monotone dissipation along solution trajectories according to
\begin{equation*}
\frac {\dd} {\dd t}{\cal F}(\rho(\cdot,t))= -\int_{\R^d}
\nabla c^*\left[ \nabla \frac{\delta {\cal F}}{\delta \rho}(\rho(x,t))\right] \cdot \nabla \frac{\delta {\cal F}}{\delta \rho}(\rho(x,t))\,\rho(x,t)\dd x\leq 0,
\end{equation*}
establishing the gradient flow structure for general costs as introduced in \cite{Brenier03}, and generalizing the classical Euclidean gradient flow theory \cite{AGS}.

Two prominent classes of equations motivate our study. First, the $p$-Laplacian and doubly nonlinear equations arising from power-law costs $c(x)={|x|^q}/{q}$, where $|\cdot|$ denotes the euclidean distance in $\RR^{d}$ and $q$ is conjugate to $1<p<\infty$, combined with appropriate energy densities $U(s)=s\log s$ or $U(s)={s^m}/{(m-1)}$ with $m>1$. These yield equations of the form
\begin{equation}\label{doubly}
\frac{\partial\rho}{\partial t} = \nabla\cdot \left(\left|\nabla \rho^m \right|^{p-2}\nabla \rho^m \right),
\end{equation}
which include the porous medium equation and $p$-Laplacian equations as special cases \cite{Ot96,Agu,Agu2,ABC}.

Second, flux-limited equations are another large family of equations included in \eqref{contgen}. They appear in relativistic flows, see
\cite{ACMM,ACMM2,MP,BP1,BP2}, and mathematical biology as flux-limited versions of the classical Keller-Segel model for chemotaxis, see \cite{BBNS12,PVW20}. 
A particular example of flux-limited equations is usually referred as the relativistic heat equation, as introduced in \cite{Brenier03},
\begin{equation}\label{relheat}
\derpar{\rho}{t} =
\nabla\cdot\left(\rho\frac{\nabla\rho}{\sqrt{\rho^2+|\nabla\rho|^2}}\right)=
\nabla\cdot\left(\rho\frac{\nabla\log\rho}{\sqrt{1+|\nabla\log\rho|^2}}\right).
\end{equation}
Here, the cost function is given by
\begin{equation}
\label{e:defc}
c(x)=
\left\{\begin{array}{cl}1-\sqrt{1-|x|^2} &\mbox{ if }|x|\leq 1,\\
+\infty &\mbox{ if } |x|> 1,\end{array}\right.
\end{equation}
for which $c^*(x)=\sqrt{1+|x|^2}-1$, and the energy density is $U(s)=s\log s$.

The two aforementioned particular costs will receive
special attention in our analysis due to their interest in applications to gradient flows,
however we will develop our approach in complete generality. 

Our computational approach builds upon the variational formulation of these gradient flows through a generalized Jordan-Kinderlehrer-Otto (JKO) scheme. The JKO scheme \cite{Ot96,JKO} was originally proposed for nonlinear parabolic PDEs with gradient flow structure and Euclidean cost $c(x)=|x|^2/2$, and where the discrete time evolution of the density is given by a proximal operator in the Wasserstein-2 metric
\begin{equation}\label{jkoprox}
 \rho^{n+1} \in \prox_{\Delta t {\cal F}}^{{\cal W}_2}(\rho^n) :=\underset{\rho}{\operatorname{argmin}} \left\lbrace
 \frac{1}{2\,\Delta t} {\cal W}_2^2(\rho^n_{\Delta t},\rho) + {\cal F}(\rho) 
 \right\rbrace.
 \;\,
\end{equation}
This scheme and its generalization will be presented in detail in the next section. However, it is worth mentioning that the eq. \eqref{jkoprox} has inspired a substantial amount of numerical approximation schemes for nonlinear parabolic PDEs of the form \eqref{contgen}:  the classical approach based on the inverse of the distribution function in one dimension \cite{CCM13}, the evolution of diffeomorphisms \cite{CM09,Junge2017}, and entropic regularization strategies \cite{Peyre2015,Carlier2017,MS20}. We refer to \cite{CMW21} for a complete survey on the subject. 

The aforementioned works circumvent a fundamental difficulty associated with the construction of schemes of the form \eqref{jkoprox}: the computation of the ${\cal W}_2$ distance. Following the pioneering work by Benamou and Brenier  \cite{BB}, an alternative is to cast the ${\cal W}_2$ distance between two densities as a PDE-constrained optimization problem over the continuity equation. While it is certainly more computationally expensive than resorting to entropic regularization techniques, the PDE-constrained optimization approach provides a convex variational framework without the need of regularization. This is particularly relevant for an implicit scheme built on \eqref{jkoprox}, which is prone to accumulate regularization error over long-term evolution. The idea of computing the ${\cal W}_2$ distance using a fluid flow formulation in conjunction with convex optimization methods has been explored in \cite{BB,PPO,BC15} in the context of optimal transport, in \cite{BC15,BAKS,briceno2019implementation} for mean field games, and in \cite{CCWW} in the framework of the JKO scheme with Euclidean cost and nonlinear mobilities \cite{CWW}. All these works can be understood as instances of the JKO scheme where, by following a discretize-then-optimize approach, after a suitable spatial discretization, every iteration of the scheme is cast as a large-scale convex optimization problem. This problem is solved using a proximal primal-dual splitting method \cite{CP11}, where the proximal operator calculation distillates in two fundamental operations: the solution of a large-scale linear system of equations arising from density discretization, and a scalar root-finding procedure in every grid-point, depending on both the free energy ${\cal F}$ and the cost $c(x)$.

In this work, our contribution focuses on the study of primal-dual proximal operator methods for the approximation of a generalized JKO scheme where the ${\cal W}_2$ distance in \eqref{jkoprox} is replaced by a generalized transport distance which covers PDEs of the form \eqref{contgen}. The core of our numerical approach is to derive explicit formulas for the proximal operators associated with the generalized cost functions under consideration, enabling efficient root finding via Newton-Raphson, and generating fast iterations of the primal-dual solver driving the JKO evolution.

The rest of the paper is structured as follows. In Section 2, we revisit the JKO scheme, its generalization, and the Benamou-Brenier formulation of the optimal transport problem. Section 3 is devoted to the discretization of the generalized JKO scheme and its solution using a proximal primal-dual splitting method. Section 4 is devoted to convex analysis methods for the calculation of proximal operators leading to transcendental one-dimensional nonlinear equations to be numerically solved. Finally, Section 5 presents numerical experiments for the $p-$Laplacian and the relativistic heat equation, assessing the overall performance of the proposed scheme to reproduce the essential features of the solutions regarding asymptotic and qualitative behavior.


\section{Generalised JKO schemes for general nonlinear degenerate parabolic equations} 

Let $\Omega\subset\RR^{d}$ be convex, open and bounded,  and let $\PP(\Omega)$ be the space of probability measures 
over $\Omega$.  We denote by $\PP^{{\rm ac}}(\Omega)$ the subset of $\PP(\Omega)$ formed
by absolutely continuous measures with respect to the Lebesgue measure. Given $\rho_0, \rho_1\in\PP(\Omega)$ and a measurable map $T\colon\Omega \to \Omega$, we say that $T$ 
{\it transports} $\rho_0$ onto $\rho_1$, and we note $\rho_1=T\# \rho_0$, if for any measurable set $B \subset\Omega$, $\rho_1(B)=\rho_0(T^{-1}(B))$, or
equivalently,
\begin{equation*}
\int_{\Omega} \zeta(T(x))\dd \rho_0(x) = \int_{\Omega} \zeta(y)
\dd \rho_1(y) \quad \forall \zeta \in \mathcal{C}_b(\Omega),
\end{equation*}
where $\mathcal{C}_b(\Omega)$ denotes the space of real-valued continuous and bounded functions defined on $\Omega$. The euclidean Wasserstein distance ${\cal W}_2(\rho_0,\rho_1)$ between $\rho_0$ and 
$\rho_1$ can be defined through
\begin{equation}
\label{eq:defMonge}
{\cal W}_2^2(\rho_0, \rho_1) :=  \inf_{T :\ \rho_1=T\# \rho_0} \int_{\Omega}
|x-T(x)|^2\dd\rho_0(x),
\end{equation}
which can be shown to coincide with the following linear programming relaxation, due to Kantorovich, 
\begin{equation}
\label{eq:defwasserstein}
{\cal W}_2^2(\rho_0, \rho_1) = \inf_{\Pi\in\Gamma} \left\{ \int_{\Omega \times
\Omega} \vert x - y \vert^2 \, \dd \Pi(x, y) \right\},
\end{equation}
where $\Pi$ runs over the set of transference plans $\Gamma$, 
that
is, the set of joint probability measures on $\Omega\times\Omega$
with marginals $\rho_0$ and $\rho_1$. Notice that, by the Kantorovich duality theorems \cite[Chapter 1]{Villani}, the infimum in~\eqref{eq:defwasserstein} is actually a minimum. If $\rho_0,\,\rho_1\in\PP^{{\rm ac}}(\Omega)$, an alternative expression, due to Benamou and Brenier~\cite{BB} (see also~\cite[Theorem 2.2]{Brenier03}), which is very useful in the numerical approximation of ${\cal W}_{2}(\rho_0,\rho_1)$, is given by
\begin{eqnarray}\label{BB4}
\frac12 {\cal W}_2^2(\rho_0,\rho_1)  =  \min_{(\rho, \boldsymbol{m}) \in \C_1}  \int_0^1 
\int_\Omega \Phi(\rho(x,t), \boldsymbol{m}(x,t))\,\rd x \rd t,
\end{eqnarray}
where $\Phi\colon\RR\times\RR^{d}\to\RR\cup\{+\infty\}$ is the perspective function of $|\cdot|^2/2$, see \cite{Rock66}, given by
\begin{equation} \label{Jdef}
\Phi(\rho, \boldsymbol{m}) = 
\begin{cases}
\dfrac{|\boldsymbol{m}|^2}{2 \rho} &\textrm{if } \rho>0, \\
0 & 
\textrm{if } (\rho, \boldsymbol{m})=(0,0), \\
+\infty & \textrm{otherwise\,,}
\end{cases}
\end{equation}
and $(\rho,\boldsymbol{m})\in \C_1$ if $\rho\colon[0,1]\to\PP^{{\rm ac}}(\Omega)$ and $\boldsymbol{m}\colon[0,1]\to(\PP^{{\rm ac}}(\Omega))^{d}$ are measurable, 
\begin{equation*}
\rho(\cdot,0) = \rho_0\quad\text{and}\quad\rho(\cdot, \Dtime) = \rho_{\Dtime} 
\text{ on } \Omega,
 \label{BB6}
\end{equation*}
and the following equation holds in the distributional sense 
\begin{align*}
\frac{\partial\rho}{\partial t} + \nabla \cdot \boldsymbol{m} = 0 &\quad\text{ on } \Omega \times[0,1],\\ 
 \boldsymbol{m} \cdot \nu = 0   &\quad\text{ on } \partial \Omega \times [0,1], 
\end{align*}
where 
$\nu\colon\partial\Omega\to\RR^d$ satisfies that, for every $x\in\partial\Omega$, $\nu(x)$ is a unit outward normal to $\Omega$ at $x$. Unlike~\eqref{eq:defMonge}, the new formulation (\ref{BB4}) is a convex minimization problem with linear constraints, and hence, after discretization, is amenable to be solved by convex optimization techniques, see \cite{BB,CP19} and the references therein. 

Associated with the Euclidean Wasserstein distance ${\cal W}_2$, a variational scheme for
the doubly nonlinear equation and the heat equation was 
introduced
in \cite{Ot96,JKO}, and consequently generalized to all the
equations of the form \eqref{contgen} in \cite{Agu,AGS}. This
variational scheme reads as:
\begin{equation}\label{eq:defminimisepme}
 \rho^{n+1}_{\Delta t} \in \underset{\rho \in \PP^{{\rm ac}}(\Omega)}{\operatorname{argmin}} \left\lbrace
 \frac{1}{2\,\Delta t} {\cal W}_2^2(\rho^n_{\Delta t},\rho) + {\cal F}(\rho) 
 \right\rbrace
 \;,
\end{equation}
for a fixed time step $\Delta t>0$ and an initial datum $\rho^0\in
\PP^{{\rm ac}}(\Omega)$. This steepest descent scheme can be understood \cite{AGS} as an implicit
time discretization of an abstract gradient flow equation in the
space of probability measures. We refer to
\cite{Otto01,Villani,AGS,Carrillo-McCann-Villani03,Carrillo-McCann-Villani06,Santambrogio2017} for a deeper
discussion, the heuristics and the rigorous sense of the gradient
flow structure. In \cite{AGS}, it is proved that suitable time
interpolation of the solutions of this variational scheme approaches
the solution of the limiting equation \eqref{contgen}, with quadratic cost $c(x)={|x|^2}/{2}$, at first
order in time in the ${\cal W}_2$ sense.

More general equations \eqref{contgen} can be treated by allowing
more general distances induced by different cost functionals for
the transport of mass from location $x\in\Omega$ to location $y\in\Omega$. More precisely, let $\phi\colon\RP\to\RPX$ be increasing, 
strictly convex and supercoercive, i.e., $\lim_{\xi\to+\infty}\phi(\xi)/\xi=+\infty$, define $c\colon\RR^{d}\to\RPX$ by $c=\phi\circ|\cdot|$, and assume that $c$ is differentiable in the interior of its domain. Then the generalized variational
scheme to equation \eqref{contgen} is
\begin{equation}\label{vergen}
\rho^{n+1}_{\Delta t} \in \underset{\rho \in\PP^{{\rm ac}}(\Omega)}{\operatorname{argmin}}
 \left\{ \Delta t {\cal W}_{c}^{\Delta t}(\rho^n_{\Delta t},\,\rho) + {\cal F}(\rho) 
 \right\},
\end{equation}
where ${\cal W}_{c}^{\Delta t}$ is defined as 
\begin{equation}\label{BB4c}
{\cal W}_{c}^{\Delta t}(\rho_0,\rho_1)=\min_{\Pi\in\Gamma} \left\{ \int_{\Omega \times
\Omega}c\left(\frac{x-y}{\Delta t}\right)\dd\Pi(x, y) \right\}\,.
\end{equation}
We refer to \cite{Agu} for details. It follows from~\eqref{vergen} and the extension of~\eqref{BB4} in~\cite{Brenier03}, which yields a dynamic reformulation of $ {\cal W}_{c}^{\Delta t}(\rho^n_{\Delta t},\,\rho)$, that $\rho^{n+1}_{\Delta t}$ can be determined as $\rho^{n+1}_{\Delta t}=\rho^*(\cdot,\Delta t)$ with $\rho^*$ obtained by minimizing the functional
\begin{eqnarray} 
\label{rhonp}
  \min_{(\rho, \boldsymbol{m}) \in \C_{\Delta t}^{n}}  \int_0^{\Delta t} 
\int_\Omega \Phi_c(\rho(x,t), \boldsymbol{m}(x,t))\rd x \rd t + {\cal F}(\rho(\cdot, 
\Delta t)),
\end{eqnarray}
where  $\Phi_{c} :\RR\times\RR^{d}\to\RPX$ is the perspective function of $c$, given by
\begin{align} \label{Jdefc}
\Phi_c(\rho, \boldsymbol{m}) &= 
\begin{cases}
\rho\, 
c\left(\frac{\boldsymbol{m}}{\rho}\right), \quad & \textrm{if } \rho>0;\\
0, & \textrm{if } (\rho, \boldsymbol{m})=(0,0);\\
+\infty, & \textrm{otherwise},
\end{cases}
\end{align}
and $(\rho,\boldsymbol{m})\in \C_{\Delta t}^{n}$ if $\rho\colon[0,\Delta t]\to\PP^{{\rm ac}}(\Omega)$ and $\boldsymbol{m}\colon[0,\Delta t]\to(\PP^{{\rm ac}}(\Omega))^{d}$ are measurable, $\rho(\cdot,0)=\rho^n_{\Delta t}$,
and the following equation holds in the distributional sense 
\begin{align}
\frac{\partial\rho}{\partial t} + \nabla \cdot \boldsymbol{m} = 0 &\quad\text{ on } \Omega \times[0,\Delta t],
 \label{CC1}\\ 
 \boldsymbol{m} \cdot \nu = 0   &\quad\text{ on } \partial \Omega \times [0,\Delta t]. 
 \label{CC2}
\end{align}
The variational scheme~\eqref{vergen} was proven to be convergent in a very general setting
\cite{Agu,AGS,MP,BP1,BP2,CGW} for the $p$-laplacian equations, the
doubly-nonlinear equations \eqref{doubly} and the relativistic heat equation \eqref{relheat2}.

As already mentioned, we will consider two important relevant choices of the cost function $c$. On the one hand, we will deal with the power-law cost $c(x)={|x|^q}/{q}$ with $1<q<\infty$ leading
to variational schemes approximating, for instance, the doubly nonlinear equation 
\begin{equation*}
\frac{\partial\rho}{\partial t} = \nabla\cdot \left[\left|\nabla \rho^m \right|^{p-2}\nabla \rho^m \right]\,,
\end{equation*}
with $p$ the conjugate exponent of $q$, $m>1$, and  the energy $U(s)=s^m/(m-1)$. On the other hand, we will introduce a generalized cost interpolating between the quadratic and the relativistic cost in \eqref{e:defc} given by
\begin{equation*}
\label{e:defc3}
c_{\alpha,k}(x)= \frac{k^2}{\alpha} c\left(\frac{x}{k}
\right)=
\left\{\begin{array}{cl}\frac{k^2}{\alpha}-\frac{k^2}{\alpha}\sqrt{1-\frac1{k^2}|x|^2} &\mbox{ if }|x|\leq k,\\
+\infty &\mbox{ if } |x|> k,\end{array}\right.
\end{equation*}
with $\alpha,k>0$ and
$$
c_{\alpha,k}^*(x)= \frac{k^2}{\alpha}\sqrt{1+\frac{\alpha^2}{k^2}|x|^2}-\frac{k^2}{\alpha}= \frac{k^2}{\alpha} c^*\left(\frac{\alpha}{k}x
\right)
$$
and $U(s)=s \log (s)$, 
leading to the flux-limited equation
\begin{equation}\label{relheat2}
\derpar{\rho}{t} =
\nabla\cdot\left(\rho\frac{\nabla\rho}{\sqrt{\frac{1}{\alpha^2}\rho^2+\frac{1}{k^2}|\nabla\rho|^2}}\right)=
\nabla\cdot\left(\rho\frac{\nabla\log\rho}{\sqrt{\frac{1}{\alpha^2}+\frac{1}{k^2}|\nabla\log\rho|^2}}\right).
\end{equation}
We remark that taking $\alpha\to \infty$ in \eqref{relheat2} leads formally to the total variation flow and taking $k\to\infty$ leads to the heat equation.

Finally, to verify that the formulation \eqref{rhonp}--\eqref{CC2} indeed 
gives us the correct discretization of the gradient flow, we rewrite it as in \cite{Brenier03} 
by enforcing the linear constraint via a Lagrange multiplier 
$\phi(x,t)$. For $\rho >0$, the problem formally becomes
$$
\min_{( {\rho}, \boldsymbol{m})} \sup_{\phi}    \int_\Omega \int_0^{\Delta t} \left(\rho\, 
c\left(\frac{\boldsymbol{m}}{\rho}\right) + \phi\left( \derpar{\rho}{t} + \nabla \cdot \boldsymbol{m}\right)\right)\rd x \rd s + 
{\cal F}(\rho(\cdot, \Delta t)).
$$
Integrating by parts, this is equivalent to
\begin{align*}
\min_{( {\rho}, \boldsymbol{m})} \sup_{\phi}   
\int_{\Omega} \int_0^{\Delta t}  \left( c\left(\frac{\boldsymbol{m}}{\rho}\right)  
\right.&\left.- \,\derpar{\phi }{t} - \nabla \phi \cdot \frac{\boldsymbol{m}}{\rho} \right)\rho\,\rd x \rd t   +  
\int_{\Omega} \phi(\cdot,\Delta t) \rho(\cdot,\Delta t)   \\ &- 
\int_{\Omega} \phi(\cdot, 0) \rho(\cdot, 0)   + {\cal F} (\rho(\cdot, 
\Delta t) ) \,.
\end{align*}
Therefore, the optimality conditions are
\begin{equation*} 
\derpar{\phi }{t} = - c^*(\nabla \phi), \quad \nabla \phi = 
\nabla c\left(\frac{\boldsymbol{m}}{\rho}\right), \quad \phi(\cdot, \Delta t) = - 
\frac{\delta {\cal F}}{ \delta \rho}(\rho(\cdot, \Delta t)).
\end{equation*}
If we substitute the optimality conditions in the continuity equation 
(\ref{CC1})-(\ref{CC2}), noting that $\nabla c^*\circ\nabla 
c$ is the identity function in $\Omega$,  we finally obtain
\begin{equation}
\label{eq:eq_opt_cond}
\derpar{\rho }{t}= -\nabla\cdot \left\{
\rho\nabla c^*\left(\nabla \phi \right)\right\}, \quad \nabla \phi(\cdot, \Delta t) = - 
\nabla U'(\rho(\cdot, \Delta t))
\end{equation}
in the interval $[0,\Delta t]$. Recalling that $c=\phi\circ |\cdot|$, the assumptions on $\phi$ imply that $\nabla c^*\left(- 
\nabla U'(\rho(\cdot, \Delta t))\right)=-\nabla c^*\left(
\nabla U'(\rho(\cdot, \Delta t))\right)$, and hence~\eqref{eq:eq_opt_cond} yields a time discrete approximation of \eqref{contgen}.

\section{Discretization of Problem}
In this section we discretize the JKO step \eqref{rhonp}--\eqref{CC2}. The problem 
reduces to: given $n\in\NN$ and an approximation $\rho^n_{\Delta t}$ of the density $\rho(\cdot,n\Delta t)$, an approximation $\rho_{\Delta t}^{n+1}$ of the density  $\rho(\cdot,(n+1)\Delta t)$  is given by $\rho^{*}$ where $(\rho^{*},\boldsymbol{m}^{*})$ minimizes
\begin{eqnarray} 
\label{e:ofdiscrete}
(\rho, \boldsymbol{m})\mapsto \Delta t \int_\Omega \Phi_c(\rho(x), \boldsymbol{m}(x))\rd x  + {\cal F}(\rho),
\end{eqnarray}
over the set of $(\rho,\boldsymbol{m})$ satisfying 
\begin{equation}
\label{CCD}
\begin{aligned}
\rho+ \Delta t\nabla \cdot \boldsymbol{m} &=  \rho^n_{\Delta t}\quad\text{ on } 
\Omega  \,,  
\\ \boldsymbol{m} \cdot \nu &= 0\quad\text{ on } \partial \Omega.
\end{aligned}
\end{equation}

In order to discretize in the space variable, for simplicity we assume in what follows that $\Omega$ is a $d$-dimensional hyperrectangle, i.e. $\Omega = \prod_{l=1}^{d} ]a_l,b_l[$. For every $l\in\{1,\hdots,d\}$, let $N_l\in\NN$, let $\Delta x_l=(b_l-a_l)/N_l$, and set $\deltaxbf =(\Delta x_1,\hdots,\Delta x_{d})$. Let us define $\I=\{i\in\NN^{d}\,:\,i_{l}\in\{0,\hdots,N_l\}\;\forall l\in\{1,\hdots,d\}\}$, $\I^{\circ}=\{i\in\I\,:\,i_{l}\neq\{0,N_l\}\;\forall\,l\in\{1,\hdots,d\}\}$, $\partial\I=\I\setminus\I^{\circ}$, and denote by $\RR^{\I}$ the set of real-valued functions defined on $\I$. We associate with $\I$ the grid points $\boldsymbol{x}_{i}\in\overline{\Omega}$ defined by $(\boldsymbol{x}_i)_{l}=a_{l}+i_{l}\Delta x_l$ for every $i\in\I$ and $l\in\{1,\hdots,d\}$. In our approximation of the optimization problem defined by the cost~\eqref{e:ofdiscrete} and the constraint \eqref{CCD}, instead of functions $(\rho,\boldsymbol{m})$ defined over $\Omega$, we consider grid functions $(\widehat{\rho},\widehat{\boldsymbol{m}})\in \RR^{\I}\times(\RR^{\I})^{d}$ and we assume that an approximation $\widehat{\rho}_{n}(\boldsymbol{x}_i)$ of $\rho^n_{\Delta t}(\boldsymbol{x}_i)$ is given for every $i\in\I$.

For notational simplicity, given $f\in\RR^{\I}$ and $i\in\I$, we will write $f_{i}$ for $f(i)$. For every $l\in\{1,\hdots,d\}$, $D_{l}^{\deltaxbf}f_{i}$ denotes the centered difference 
$$
D^{\deltaxbf}_l f_i = \frac{f_{i+{\bf e}_{l}} -f_{i-{\bf e}_l}}{2\Delta x_l}, 
$$
where ${\bf e}_{l}$ denotes the $l$th canonical vector in $\R^d$. This discrete derivative allows to approximate the divergence term in~\eqref{CCD} when applied to $\widehat{\boldsymbol{m}}=(\widehat{\boldsymbol{m}}_1,\hdots,\widehat{\boldsymbol{m}}_{d})\in (\RR^{\I})^{d}$ at $i\in\I$ by $\sum_{l=1}^{d}D^{\deltaxbf}_{l}(\widehat{\boldsymbol{m}}_{l})_{i}$. Summing up, we obtain the following discretization of~\eqref{CCD}:
\begin{equation}
\label{CCDh}
\begin{aligned} 
\widehat{\rho}_{i} + \Delta t\sum_{l=1}^{d}D^{\deltaxbf}_{l}(\widehat{\boldsymbol{m}}_{l})_{i}=\widehat{\rho}_{n}(\boldsymbol{x}_i)&\quad\text{for }i\in 
\I^{\circ}, 
 \\ 
 \sum_{l=1}^{d}(\widehat{\boldsymbol{m}}_{l})_{i}\nu_{l}(\boldsymbol{x}_i) = 0   &\quad\text{for }i\in\partial\I. 
\end{aligned} 
\end{equation}

In order to discretize the cost functional~\eqref{e:ofdiscrete}, it will be useful to extend functions defined on the grid to piecewise constant functions defined on $\overline{\Omega}$. For this purpose, let $\widehat{\I}=\{i\in\NN^{d}\,:\,i_{l}\in\{0,\hdots,N_l-1\}\;\forall l\in\{1,\hdots,d\}\}$ and notice that $\overline{\Omega}=\bigcup_{i\in\widehat{\I}}Q_{i}$, where, for every $i \in\widehat{\I}$, $Q_i=\prod_{l=1}^{d}I_{i,l}$ with
$$
I_{i,l}=\begin{cases}
[a_l+i_l(\Delta x)_l, a_l+ (i_l+1)( \Delta x)_l[&\text{if } i_l\in\{0,\hdots,N_l-2\},\\
[b_l-(\Delta x)_l, b_l]&\text{if } i_l= N_{l}-1.
\end{cases}
$$
A function $f\in\RR^{\I}$ on the grid can then be extended to a piecewise constant function on $\overline{\Omega}$ by setting 
\begin{align*} 
 f^{\deltaxbf} := \sum_{i\in\widehat{\I}}   f_{i} 1_{Q_{i}}\quad\text{where, for every $z\in\overline{\Omega}$,}\quad1_{Q_{i}}(z) = \begin{cases} 1\quad& \text{if }z \in Q_{i}, \\ 0\quad& \text{ otherwise.} \end{cases}
\end{align*}
Using this extension, we approximate the energy ${\cal F}(\rho) = \int_{\Omega} U(\rho)\dd x$ by a discrete analogous energy $\mathcal{F}^{\deltaxbf}$, which, for every $\widehat{\rho}\in\RR^{\I}$, is defined by
\begin{align*} 
 \mathcal{F}^{\deltaxbf}(\widehat{\rho}) &:=\int_{\Omega}U(\widehat{\rho}^{\deltaxbf}(x))\dd x=h\sum_{i\in\widehat{\I}}  U(\widehat{\rho}_i),
\end{align*}
where $h:=\Delta x_1\cdots\Delta x_d$.

Finally, regarding the discretization of the integral term in~\eqref{e:ofdiscrete} involving the perspective function $\Phi_{c}$, given $(\widehat{\rho},\widehat{\boldsymbol{m}})\in \RR^{\I}\times(\RR^{\I})^{d}$ we can argue as above to deduce the following first-order in time approximation 
\begin{align}
\label{def:Phi_h_c}
I_c^{\deltaxbf}(\widehat{\rho},\widehat{\boldsymbol{m}}):=\Delta t h\sum_{i\in\widehat{\I}}\Phi_c(\widehat{\rho}_{i},\widehat{\boldsymbol{m}}_{i}),
\end{align}
see \cite{CCWW,LLW} . Altogether, we are led to consider the problem of minimizing $I_c^{\deltaxbf}(\widehat{\rho},\widehat{\boldsymbol{m}})+\mathcal{F}^{\deltaxbf}(\widehat{\rho})$ over $(\widehat{\rho},\widehat{\boldsymbol{m}})\in \RR^{\I}\times(\RR^{\I})^{d}$ satisfying~\eqref{CCDh}. Notice that identifying $(\widehat{\rho},\widehat{\boldsymbol{m}})$ with a vector in $\RR^{|\I|}\times\RR^{d|\I|}$, the constraint~\eqref{CCDh} takes the form 
\begin{align}
\label{eq:linear_constraints}
\Amat (\widehat{\rho},\widehat{\boldsymbol{m}}) = b_n\,
\end{align}
for a suitable matrix $\Amat\in \RR^{|\I|\times (d+1)|\I|}$ and a vector $b_n\in\RR^{|\I|}$. Since we can only expect that the approximation of the spatial derivatives to hold up to an error term,  as in~\cite{CCWW} we relax the equality constraint~\eqref{eq:linear_constraints} in the form $|\Amat  (\widehat{\rho},\widehat{\boldsymbol{m}}) -b_n|\leq \delta$, for some $\delta\geq 0$. This consideration leads to the following discrete approximation of the JKO step:
\begin{align*}
\min\;\left\{I_c^{\deltaxbf}(\widehat{\rho},\widehat{\boldsymbol{m}})+\mathcal{F}^{\deltaxbf}(\widehat{\rho})\;:\; |\Amat (\widehat{\rho},\widehat{\boldsymbol{m}})-b_n|\leq \delta\right\},
\end{align*}
or, equivalently,
\begin{align}
\label{eq:fully_discrete_optmization_problem}
\min_{(\widehat{\rho},\widehat{\boldsymbol{m}})}I_c^{\deltaxbf}(\widehat{\rho},\widehat{\boldsymbol{m}})+\mathcal{F}^{\deltaxbf}(\widehat{\rho})+\iota_{ B_{\delta}(b_n)}(\Amat (\widehat{\rho},\widehat{\boldsymbol{m}})),
\end{align}
where $B_{\delta}(b_n)=\{z\in\RR^{|\I|}\,:\,|z-b_n|\leq\delta\}$ and, for every $C\subseteq\RR^{|\I|}$,
$$
\iota_{C}(b')=\begin{cases} 0, \quad&\text{if }b'\in C,\\ 
+\infty, \quad& \text{otherwise}. 
\end{cases}
$$
Note that, when $C$ is closed and convex, $\iota_{C}$ is lower semicontinuous and convex.

In order to provide the algorithms 
to solve \eqref{eq:fully_discrete_optmization_problem}, we need to introduce
the proximity operator of a proper convex lower semicontinuous function 
$f\colon\RR^M\to\RX$, which is defined by 
\begin{equation*}
(\forall x\in\RR^M)\quad \prox_{f}x=
\arg\min_{y\in\RR^M}\Big(f(y)+\frac{1}{2}|y-x|^2\Big).
\end{equation*}
Since, for every $x\in\RR^M$, the function 
$f+|\cdot-x|^2/2$ is strongly convex and lower semicontinuous, $\prox_f$ is well defined. 
Then, under the assumption that $\mathcal{F}^{\deltaxbf}$ is differentiable with Lipschitz gradient, by defining $\Psi\colon(\widehat{\rho},\widehat{\boldsymbol{m}})\mapsto \mathcal{F}^{\deltaxbf}(\widehat{\rho})$, the algorithm developed in \cite{Yan} applied to~\eqref{eq:fully_discrete_optmization_problem} becomes 
\begin{align}
   & \phi^{(\ell+1)} = \prox_{\sigma i_{B_\delta(b_n)}^*} (\phi^{(\ell)} + \sigma \Amat 
   \bar{{u}}^{(\ell)}),
   \label{eq:Yan_step_1}
    \\ & {u}^{(\ell+1)} = \prox_{\lambda I_c^{\deltaxbf}} 
    ({u}^{(\ell)} - \lambda\nabla\Psi(u^{(\ell)})- \lambda  \Amat^\top 
    \phi^{(\ell+1)}),\label{eq:Yan_step_2}
    \\ & \bar{{u}}^{(\ell+1)} = 2{u}^{(\ell+1)} - {u}^{(\ell)} + \lambda  \nabla\Psi(u^{(\ell)})- \lambda \nabla\Psi(u^{(\ell+1)}),
    \label{eq:Yan_step_3}
\end{align}
where $\sigma,\,\lambda\in\RPP$ and $u^{(\ell)}=(\widehat{\rho}^{(\ell)},\widehat{\boldsymbol{m}}^{(\ell)})$ and $\phi^{(\ell)}$ denote the primal and dual iterates, respectively. The convergence of $(\widehat{\rho}^{(\ell)},\widehat{\boldsymbol{m}}^{(\ell)})$, as $\ell\to\infty$, towards a solution $(\widehat{\rho}_{n+1},\widehat{\boldsymbol{m}}_{n+1})$ of~\eqref{eq:fully_discrete_optmization_problem} is ensured if $\sigma\lambda\|\Amat\|^2<1$ and $\lambda<2/L$,
where $L>0$ is the Lipschitz constant of $\nabla {\mathcal F^{\deltaxbf}}$. In addition, the algorithm proposed in \cite{condat,vu} reads
\begin{align}
   & \phi^{(\ell+1)} = \prox_{\sigma i_{B_\delta(b_n)}^*} (\phi^{(\ell)} + \sigma \Amat 
   \bar{{u}}^{(\ell)}),
   \label{eq:Condat_Vu_step_1}
    \\ & {u}^{(\ell+1)} = \prox_{\lambda I_c^{\deltaxbf}} ({u}^{(\ell)} - 
    \lambda \nabla \Psi(u^{(\ell)}) - \lambda  \Amat^\top     \phi^{(\ell+1)}),
    \label{eq:Condat_Vu_step_2}
    \\ & \bar{{u}}^{(\ell+1)} = 2{u}^{(\ell+1)} - {u}^{(\ell)},
    \label{eq:Condat_Vu_step_3}
\end{align}
which converges under the condition $\sigma\|\Amat\|^2\le\frac{1}{\lambda}-\frac{L}{2}$.

Both procedures \eqref{eq:Yan_step_1}-\eqref{eq:Yan_step_3} and \eqref{eq:Condat_Vu_step_1}-\eqref{eq:Condat_Vu_step_3} produce an approximation $\widehat{\rho}_{n+1}$ of $\rho^{n+1}_{\Delta t}$ which is then used to solve the next JKO step. In both algorithms, it is necessary to compute 
$\prox_{\lambda I_c^{\deltaxbf}}$, where $\lambda>0$. It follows from~\eqref{def:Phi_h_c} that
$$
(\forall\,i\in\I)\quad (\prox_{\lambda I_c^{\deltaxbf}}(u))_{i}=\prox_{\gamma \Phi_{c}}(u_{i}),
$$
where $\gamma=\lambda \Delta t h$. In particular, steps~\eqref{eq:Yan_step_2} and~\eqref{eq:Condat_Vu_step_2} can be parallelized and they only involve the computation of $\prox_{\gamma\Phi_{c}}$, which is addressed in the following section. We also study a more compact version of these algorithms, where the term $\nabla \Psi$ is incorporated directly in a joint proximal operator $\prox_{\gamma(\Phi_c+\Psi)}$. This formula is useful when $\nabla\Psi$ is not Lipschitz or its constant is not easy to compute.

\section{Algorithms and proximity operator calculus}
Before the computation of the proximity operator of $\Phi$, let us first recall some of its properties. Given a proper convex lower semicontinuous function 
$f\colon\RR^M\to\RX$
it follows from~\cite[Proposition~16.44]{BausComb19}, that
\begin{equation}
\label{e:caracprox}
(\forall (x,p)\in\RR^M\times\RR^M)\quad
p=\prox_{f}x\:\:\Leftrightarrow\:\:
x-p\in \partial f(p),
\end{equation}
where
\begin{equation*}
\label{e:defsubdif}
\partial f\colon\RR^M\to 2^{\RR^M}\colon x\mapsto 
\{u\in\RR^M\,|\,(\forall y\in\RR^M)\quad f(x)+u^{\top}(y-x)\leq f(y)\}
\end{equation*}
is the subdifferential of $f$.
In addition, from~\cite[Proposition~16.10]{BausComb19}, we have
\begin{equation}
\label{e:FY}
(\forall (x,u)\in\RR^M\times\RR^M)\quad
u\in\partial f(x)\quad\Leftrightarrow\quad 
f(x)+f^*(u)=u^{\top}x,
\end{equation}
where we recall that $f^*$ denotes the Fenchel conjugate of $f$. 
Moreover, by~\cite[Proposition~14.3{\rm(ii)}]{BausComb19}, for 
every $\gamma>0$, we have
\begin{equation}
\label{e:Moreau}
\prox_{\gamma f^*}=\id-\gamma \prox_{f/\gamma}\circ(\id/\gamma).
\end{equation}
Recall that the perspective function $\Phi_{c}$ is defined in~\eqref{Jdefc} and we assume that $c$ has the form $c=\phi\circ|\cdot|$. Now we provide the proximity operator of $\Phi_c$ in a general context
which includes the relativistic heat and $p$-Laplacian cases. 

\begin{prop}
\label{p:proxgen}
Let $F\colon\RR\to\left]-\infty,+\infty\right]$ be a lower semicontinuous convex function which is differentiable in $\inte\dom F=\RPP$ and such that $L_0:=\lim_{\xi\downarrow 0}F'(\xi)\in\RR\cup\{-\infty\}$.  Moreover, let 
$\Psi\colon\RR\times\RR^d\to\left]-\infty,+\infty\right]\colon (\rho,\boldsymbol{m})\mapsto 
F(\rho)$ and let
$\phi\colon\RR\to\left]-\infty,+\infty\right]$ be even, convex, lower 
semicontinuous,
differentiable in $\inte\dom\phi$, such that $\phi(\xi)=0$ if and only if $\xi=0$, and one of 
the following holds: 
\begin{enumerate}
\item \label{p:proxgeni}
$\dom\phi=\RR$,
$\lim_{\xi\to-\infty}\phi'(\xi)=-\infty$, and $\lim_{\xi\to+\infty}\phi'(\xi)=+\infty$ or
\item \label{p:proxgenii}
$\dom\phi=[-a,a]$, $\lim_{\xi\to -a}\phi'(\xi)=-\infty$, and $\lim_{\xi\to a}\phi'(\xi)=+\infty$, for some 
$a\in\RPP$.
\end{enumerate}
Set 
\begin{equation}
\label{e:auxgen}
\Phi\colon\RR\times\RR^d\to\RX\colon
(\rho,\boldsymbol{m})\mapsto
\begin{cases}
\rho\,\phi\left(\dfrac{|\boldsymbol{m}|}{\rho}\right),\quad&\text{if}\:\:\rho>0;\\
0,&\text{if}\;\;(\rho,\boldsymbol{m})=(0,0);\\
+\infty,&\text{otherwise}.
\end{cases}
\end{equation}
Then, for every $\gamma>0$, we have
\begin{equation*}
\prox_{\gamma(\Phi+\Psi)}\colon 
(\rho,\boldsymbol{m})\mapsto
\begin{cases}
(0,0),&\text{if}\:\:\rho+\gamma\phi^*\left(\dfrac{|\boldsymbol{m}|}{\gamma}\right)\le \gamma L_0;\\[3mm]
(\theta,\boldsymbol{v}),&\text{if}\:\:\rho+\gamma\phi^*\left(\dfrac{|\boldsymbol{m}|}{\gamma}\right)>\gamma L_0,
\end{cases}
\end{equation*}
where $\theta\ge\prox_{\gamma F}\rho\ge 0$ is the 
unique strictly positive solution to $R^{\phi}_{\rho,\boldsymbol{m}}(t)=0$ 
with
\begin{align}
\label{e:nonlineareqgen}
R^{\phi}_{\rho,\boldsymbol{m}}\colon t\mapsto t+\gamma 
F'(t)-\rho-\gamma\phi^*\bigg(\prox_{\frac{t}{\gamma}\phi^*}\bigg(\frac{|\boldsymbol{m}|}{\gamma}\bigg)\bigg),
\end{align} 
and 
\begin{equation}
\label{e:vhat}
{\boldsymbol{v}}=\begin{cases}
\left(1-\dfrac{\gamma}{|\boldsymbol{m}|}\prox_{\frac{\theta}{\gamma}\phi^*}
\left(\dfrac{|\boldsymbol{m}|}{\gamma}\right)\right)\boldsymbol{m}
,&\text{if}\;\;\boldsymbol{m}\neq 0;\\
0,&\text{if}\;\;\boldsymbol{m}= 0.
\end{cases}
\end{equation}
\end{prop}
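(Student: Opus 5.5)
We will rely on the characterization \eqref{e:caracprox}: $(p,\boldsymbol{w})=\prox_{\gamma(\Phi+\Psi)}(\rho,\boldsymbol{m})$ if and only if $(\rho-p,\boldsymbol{m}-\boldsymbol{w})\in\gamma\,\partial(\Phi+\Psi)(p,\boldsymbol{w})$. The work therefore splits into describing $\partial\Phi$ and then solving this inclusion. Since $\Psi$ is finite and differentiable on $\{\rho>0\}$, while $\Phi$ is finite at $(\rho,\boldsymbol{m})$ with $\rho>0$ and $|\boldsymbol{m}|/\rho\in\dom\phi$, we have $\partial(\Phi+\Psi)=\partial\Phi+\nabla\Psi$ at points with $p>0$. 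In that regime the inclusion reads $\rho-p-\gamma F'(p)\in\gamma\,\partial_\rho\Phi$ together with the momentum component.

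\textbf{Step 1 (subdifferential of the perspective).} We use the standard representation of the perspective as a support function,
\[
\Phi=\sigma_K,\qquad K=\{(\alpha,\boldsymbol{\beta})\,:\,\alpha+\phi^*(|\boldsymbol{\beta}|)\le 0\},
\]
since $\phi\circ|\cdot|$ has conjugate $\phi^*\circ|\cdot|$.
\begin{itemize}
\item At $p>0$, where $\Phi$ is differentiable because $|\boldsymbol{w}|/p\in\inte\dom\phi$ (guaranteed by the blow-up of $\phi'$ in case \ref{p:proxgenii}), the gradient is
\[
\bigl(\phi(s)-s\phi'(s),\;\phi'(s)\boldsymbol{w}/|\boldsymbol{w}|\bigr),\qquad s=|\boldsymbol{w}|/p .
\]
By \eqref{e:FY}, $\phi(s)-s\phi'(s)=-\phi^*(\phi'(s))$.
\item At $(0,0)$, we have $\partial\Phi(0,0)=K$.
\end{itemize}

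\textbf{Step 2 (case $(0,0)$).} The point $(0,0)$ is the prox exactly when $(\rho,\boldsymbol{m})\in\gamma K+\gamma\,\partial\Psi(0,0)$. Here $\partial F(0)=\left]-\infty,L_0\right]$, which follows from convexity and lower semicontinuity of $F$ and the definition of $L_0$; when $L_0=-\infty$, $\partial F(0)=\emptyset$. The condition is therefore that there exists $\alpha\le L_0$ with $\rho/\gamma-\alpha+\phi^*(|\boldsymbol{m}|/\gamma)\le0$, which is exactly $\rho+\gamma\phi^*(|\boldsymbol{m}|/\gamma)\le\gamma L_0$. Since the prox always exists and is unique, in the complementary case the prox must have $p=\theta>0$; we have to exclude the points $(0,\boldsymbol{w})$ with $\boldsymbol{w}\neq0$, which lie outside $\dom\Phi$.

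\textbf{Step 3 (case $\theta>0$).} Set $\boldsymbol{v}=\boldsymbol{w}$. The momentum equation is $\boldsymbol{m}-\boldsymbol{v}=\gamma\phi'(|\boldsymbol{v}|/\theta)\boldsymbol{v}/|\boldsymbol{v}|$, so $\boldsymbol{v}$ is parallel to $\boldsymbol{m}$ and $\boldsymbol{v}=0$ when $\boldsymbol{m}=0$. Write $\mu=\phi'(|\boldsymbol{v}|/\theta)$. Then $|\boldsymbol{m}|/\gamma=\mu+|\boldsymbol{v}|/\gamma$ and $|\boldsymbol{v}|/\theta\in\partial\phi^*(\mu)$. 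Rescaling gives $|\boldsymbol{m}|/\gamma-\mu\in(\theta/\gamma)\partial\phi^*(\mu)$, that is, $\mu=\prox_{\frac{\theta}{\gamma}\phi^*}(|\boldsymbol{m}|/\gamma)$. This yields \eqref{e:vhat}, since $\boldsymbol{v}=\boldsymbol{m}-\gamma\mu\,\boldsymbol{m}/|\boldsymbol{m}|$. The density equation is $\rho-\theta-\gamma F'(\theta)=-\gamma\phi^*(\mu)$, which is exactly $R^{\phi}_{\rho,\boldsymbol{m}}(\theta)=0$.

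\textbf{Step 4 (uniqueness and the bound on $\theta$).} Uniqueness of the positive root follows from uniqueness of the prox once we show, conversely, that any positive root together with $\boldsymbol{v}$ satisfies the optimality system. The steps above are reversible, so this holds. Alternatively, $R$ is strictly increasing: $t\mapsto t+\gamma F'(t)$ is strictly increasing, and $t\mapsto\phi^*(\prox_{t\phi^*/\gamma}(y))$ is nonincreasing because the prox moves toward the minimizer $0$ of $\phi^*$, which is even with $\phi^*\ge0=\phi^*(0)$. For the bound, $\phi^*\ge0$ gives $\theta+\gamma F'(\theta)\ge\rho$. Monotonicity of $\id+\gamma F'$ then yields $\theta\ge\prox_{\gamma F}\rho$.

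\textbf{Main obstacle.} The delicate step is Step 2 together with the boundary behaviour. We must compute $\partial\Psi(0,\cdot)$ correctly when $L_0$ is finite versus $-\infty$ and confirm the sum rule at $(0,0)$, for instance through a qualification condition or by direct verification of the variational inequality. In case \ref{p:proxgenii}, we must also check that the prox never sits on the boundary $|\boldsymbol{v}|=a\theta$. For that we use $\phi'\to\pm\infty$ at $\pm a$, which makes $\partial\Phi$ empty there.
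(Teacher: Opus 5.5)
Your proposal is correct and shares the paper's skeleton. Your identity $\Phi=\sigma_K$ is the paper's $\Phi^*=\iota_C$. Both proofs characterize the prox through $\partial\Phi+\partial F\times\{\boldsymbol{0}\}$ and split into $\theta=0$ versus $\theta>0$.

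The genuine difference is how existence and uniqueness of the positive root are obtained. The paper argues directly on $R^{\phi}_{\rho,\boldsymbol{m}}$. It shows the function is continuous, strictly increasing and tends to $+\infty$, and that its limit at $0^+$ is $\gamma L_0-\rho-\gamma\phi^*(|\boldsymbol{m}|/\gamma)<0$; this last fact needs \cite[Lemma~3.3(iii)]{BCS2}. You instead read existence off the existence of the prox: in the complementary case the prox is not $(0,\boldsymbol{0})$, hence has $\theta>0$, hence $\theta$ is a root. You read uniqueness off the uniqueness of the prox plus sufficiency of the optimality system.

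Your route avoids the limit analysis at $0^+$. However, it makes the ``only if'' half of your Step~2 load-bearing, so the sum rule at $(0,\boldsymbol{0})$ is genuinely required. It holds because $(1,\boldsymbol{0})\in\operatorname{ri}\dom\Phi\cap\operatorname{ri}\dom\Psi$, which is the qualification condition under which the sum rule \cite[Corollary~16.48(iv)]{BausComb19} applies; the paper uses that rule at every point at once. In exchange, the paper's route records the continuity and monotonicity of $R^{\phi}_{\rho,\boldsymbol{m}}$, which the root-finding discussion in Remark~\ref{r:1} relies on.

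Some of your steps are shorter than the paper's:
\begin{enumerate}
\item You identify $\mu=\phi'(|\boldsymbol{v}|/\theta)$ as $\prox_{\frac{\theta}{\gamma}\phi^*}(|\boldsymbol{m}|/\gamma)$ directly from $|\boldsymbol{v}|/\theta\in\partial\phi^*(\mu)$.
\item You read off $\boldsymbol{v}$ from $\boldsymbol{v}/|\boldsymbol{v}|=\boldsymbol{m}/|\boldsymbol{m}|$.
\end{enumerate}
Together these replace the paper's detour through $\prox_{\frac{\gamma}{\theta}\phi}$ and Moreau's identity. Your observation that $\partial\Phi$ is empty where $|\boldsymbol{v}|=a\theta$ also makes explicit a point the paper passes over when it asserts differentiability at $(\theta,\boldsymbol{v})$.

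When you write it out, state three things explicitly:
\begin{enumerate}
\item The assumptions on $\phi$ enter $\Phi=\sigma_K$ through $\dom\phi^*=\RR$; this is what gives $\sigma_K(0,\boldsymbol{m})=+\infty$ for $\boldsymbol{m}\neq\boldsymbol{0}$.
\item Your gradient formula at $\boldsymbol{w}=\boldsymbol{0}$ must be read with $\phi'(0)=0$.
\item The reversibility step uses $0\le\mu\le|\boldsymbol{m}|/\gamma$ from \eqref{e:proxpos}.
\end{enumerate}
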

\begin{proof}
First note that the hypotheses on $\phi$ imply 
$\phi\ge 0$. Then, since
$\phi(0)=0$, we have
\begin{equation}
\label{e:phistarpos}
(\forall \theta\in\RR)\quad 
\phi^*(\theta)=\sup_{\rho\in\RR}\big(\theta\rho-\phi(\rho)\big)\ge 
-\phi(0)=0=\phi^*(0).
\end{equation}
Moreover, note that, under \ref{p:proxgeni} and \ref{p:proxgenii} we have $\dom\phi^*=\RR$ and
\begin{equation*}
(\forall \xi\in\RR)\quad 
\phi^{\infty}(\xi):=\lim_{\lambda\to+\infty}\frac{\phi(\lambda\xi)}{\lambda}=\iota_{\{0\}}(\xi).
\end{equation*}
Then, since $(\phi\circ|\cdot|)^{\infty}=\phi^{\infty}\circ|\cdot|$
and $(\phi\circ|\cdot|)^{*}=\phi^{*}\circ|\cdot|$ \cite[Example~13.8]{BausComb19}, we 
deduce that $\Phi$ defined in \eqref{e:auxgen} is a proper convex 
lower semicontinuous function and that 
\begin{equation}
\label{e:conjpersp}
\Phi^*=\iota_C,\quad\text{where}\quad C=\menge{(\rho,\boldsymbol{m})\in\RR\times\RR^d}{\rho+\phi^*(|\boldsymbol{m}|)\le 0},
\end{equation}
in view of 
\cite[Theorem~3F]{Rock66} and \cite[Example~13.8]{BausComb19}. Therefore, $\Phi+\Psi$ is
proper, convex, and lower semicontinuous and
it follows from \eqref{e:caracprox} and \cite[Corollary~16.48(iv)]{BausComb19} that, given $(\theta,\boldsymbol{v})$ and 
$(\rho,\boldsymbol{m})$ in $\RR\times\RR^d$, we have
\begin{equation}
\label{e:proxmain}
(\theta,\boldsymbol{v})=\prox_{\gamma 
(\Phi+\Psi)}(\rho,\boldsymbol{m})\quad\Leftrightarrow\quad 
\frac{1}{\gamma}
\begin{pmatrix}
\rho-\theta\\
\boldsymbol{m}-\boldsymbol{v}
\end{pmatrix}\in \partial\Phi(\theta,\boldsymbol{v})+\partial F(\theta)\times \{\boldsymbol{0}\}.
\end{equation}
Since \eqref{e:caracprox} implies that 
$(\theta,\boldsymbol{v})\in\dom\partial\Phi\subset\dom\Phi$, we split the proof in two parts by supposing that 
either $\theta=0$ or $\theta>0$. Moreover, note that
\begin{equation*}
\label{e:subdif0}
\partial F(0)=
\begin{cases}
\left]-\infty,L_0\right],&\text{if}\:\: L_0\in\RR;\\
\varnothing, &\text{if}\:\: L_0=-\infty
\end{cases}
\end{equation*}
and, hence, in the case when $L_0=-\infty$, only the case $\theta>0$ remains.
\begin{itemize}
 \item[$\theta=0$]: We deduce from \eqref{e:auxgen} that $\boldsymbol{v}=0$ and, hence, since $L_0\in\partial F(0)$, it follows from \eqref{e:proxmain}, \eqref{e:conjpersp}, and \eqref{e:FY} that 
\begin{align*}
(0,\boldsymbol{0})=\prox_{\gamma 
(\Phi+\Psi)}(\rho,\boldsymbol{m})\quad &\Leftrightarrow\quad \left(\frac{\rho}{\gamma}-L_0,\frac{\boldsymbol{m}}{\gamma}\right)\in\partial\Phi(0,\boldsymbol{0})\nonumber\\
&\Leftrightarrow\quad 
\Phi(0,\boldsymbol{0})+\Phi^*\left(\frac{\rho}{\gamma}-L_0,\frac{\boldsymbol{m}}{\gamma}\right)=0\nonumber\\
&\Leftrightarrow\quad 
\left(\frac{\rho}{\gamma}-L_0,\frac{\boldsymbol{m}}{\gamma}\right)\in C\nonumber\\
&\Leftrightarrow\quad 
\rho+\gamma\phi^*\bigg(\frac{|\boldsymbol{m}|}{\gamma}\bigg)\le 
\gamma L_0.
\end{align*}

\item[$\theta>0$]: Note that, in this case $F$ is differentiable at $\theta$.
Hence, if $\boldsymbol{v}=0$ we have from \eqref{e:proxmain}, \eqref{e:FY}, \eqref{e:conjpersp}, and \eqref{e:phistarpos} that  
\begin{align*}
&(\theta,\boldsymbol{0})=\prox_{\gamma 
(\Phi+\Psi)}(\rho,\boldsymbol{m})\nonumber\\
&\hspace{2cm}
\Leftrightarrow\:\:\Phi(\theta,\boldsymbol{0})+\Phi^*\left(\frac{\rho-\theta}{\gamma}- 
F'(\theta),\frac{\boldsymbol{m}}{\gamma}\right)=\theta\left(\frac{\rho-\theta}{\gamma}- 
F'(\theta)\right)
\nonumber\\
&\hspace{2cm}\Leftrightarrow\:\: \rho=\theta+\gamma F'(\theta)\quad\text{and}\quad \frac{\rho-\theta}{\gamma}- 
F'(\theta)+\phi^*\left(\frac{|\boldsymbol{m}|}{\gamma}\right)\le 0
\nonumber\\
&\hspace{2cm}\Leftrightarrow\:\: \theta=\prox_{\gamma F}(\rho)\quad\text{and}\quad \boldsymbol{m}=0.
\end{align*}

If $\boldsymbol{v}\ne0$ we have 
that $\Phi$ is differentiable at $(\theta,\boldsymbol{v})$ and we deduce from \eqref{e:proxmain} 
and \eqref{e:FY} that
\begin{align}
\label{e:auxthetapos}
(\theta,\boldsymbol{v})=\prox_{\gamma (\Phi+\Psi)}(\rho,\boldsymbol{m})\quad
&\Leftrightarrow\quad 
\begin{cases}
\frac{1}{\gamma}(\rho-\theta-\gamma F'(\theta))=
\phi\big(\frac{|\boldsymbol{v}|}{\theta}\big)-\phi'\big(\frac{|\boldsymbol{v}|}{\theta}\big)\big(\frac{|\boldsymbol{v}|}{\theta}\big)\\
\frac{1}{\gamma}(\boldsymbol{m}-\boldsymbol{v})= \phi'\big(\frac{|\boldsymbol{v}|}{\theta}\big)\frac{\boldsymbol{v}}{|\boldsymbol{v}|}
\end{cases}\nonumber\\
&\Leftrightarrow\quad 
\begin{cases}
\rho-\theta-\gamma F'(\theta)= 
-\gamma\phi^*\big(\phi'\big(\frac{|\boldsymbol{v}|}{\theta}\big)\big)\\
\boldsymbol{m}-\boldsymbol{v}=\gamma\phi'\big(\frac{|\boldsymbol{v}|}{\theta}\big)\frac{\boldsymbol{v}}{|\boldsymbol{v}|}.
\end{cases}
\end{align}
Note that, \eqref{e:auxthetapos} and \eqref{e:phistarpos} imply
\begin{equation*}
\theta+\gamma 
F'(\theta)-\rho=\gamma\phi^*\Big(\phi'\Big(\frac{|\boldsymbol{v}|}{\theta}\Big)\Big)\ge
 0.
\end{equation*}
Hence, since $\id+F'$ is strictly increasing, so is its inverse and, since $\dom F\subset \RP$, we deduce
\begin{equation}
\label{e:lowboundthet}
\theta\ge\prox_{\gamma F}\rho\ge 0.
\end{equation}
In addition, we obtain from \eqref{e:auxthetapos} that
\begin{equation}
\label{e:vfctm}
\boldsymbol{v}=\frac{\boldsymbol{m}}{1+\gamma\phi'\big(\frac{|\boldsymbol{v}|}{\theta}\big)/|\boldsymbol{v}|}=\frac{|\boldsymbol{v}|}{|\boldsymbol{v}|+\gamma\phi'\big(\frac{|\boldsymbol{v}|}{\theta}\big)}\boldsymbol{m},
\end{equation}
which yields
\begin{equation*}
\label{e:proxphiaux}
\frac{|\boldsymbol{m}|-|\boldsymbol{v}|}{\gamma}=\phi'\Big(\frac{|\boldsymbol{v}|}{\theta}\Big)
\end{equation*}
or, equivalently,
\begin{equation}
\label{e:normv}
|\boldsymbol{v}|=\theta\,\prox_{\frac{\gamma}{\theta}\phi}\Big(\frac{|\boldsymbol{m}|}{\theta}\Big).
\end{equation}
Thus, it follows from \eqref{e:Moreau} that
\begin{equation}
\label{e:phiprime}
\phi'\Big(\frac{|\boldsymbol{v}|}
{\theta}\Big)=\frac{|\boldsymbol{m}|-|\boldsymbol{v}|}{\gamma}=\prox_{\frac{\theta}{\gamma}\phi^*}\Big(\frac{|\boldsymbol{m}|}{\gamma}\Big).
\end{equation}
Hence, we deduce from 
\eqref{e:auxthetapos} that
\begin{equation*}
R^{\phi}_{\rho,\boldsymbol{m}}(\theta)=\theta+\gamma 
F'(\theta)-\rho-\gamma\phi^*\bigg(\prox_{\frac{\theta}{\gamma}\phi^*}\Big(\frac{|\boldsymbol{m}|}{\gamma}\Big)\bigg)=0.
\end{equation*}
Note that, since $\dom\phi^*=\RR$, \cite[Lemma~3.3(iii)]{BCS2} implies that 
\begin{equation*}
\phi^*\bigg(\prox_{\frac{t}{\gamma}\phi^*}\Big(\frac{|\boldsymbol{m}|}{\gamma}\Big)\bigg)\to
 \phi^*\Big(\frac{|\boldsymbol{m}|}{\gamma}\Big)\quad\text{as}\:\: t\to0
\end{equation*}
 and, hence,
$\lim_{t\downarrow 0}R^{\phi}_{\rho,\boldsymbol{m}}(t)=\gamma L_0-\rho-\gamma \phi^*(|\boldsymbol{m}|/\gamma)<0$.
Moreover, since \cite[Proposition~12.27]{BausComb19} implies 
that $R^{\phi}_{\rho,\boldsymbol{m}}$ is strictly increasing,
$$\lim_{t\to+\infty}R^{\phi}_{\rho,\boldsymbol{m}}(t)=+\infty,$$ and \cite[Corollary~17.43]{BausComb19} implies the continuity of $R^{\phi}_{\rho,\boldsymbol{m}}$,
we verify from 
\eqref{e:lowboundthet} that there exists a unique solution 
$\theta\ge\prox_{\gamma F}\rho$ to the 
nonlinear equation $R^{\phi}_{\rho,\boldsymbol{m}}(t)=0$.
Finally, note that \eqref{e:normv}, \eqref{e:phiprime}, and 
\eqref{e:Moreau} yield
\begin{align*}
\label{e:constwm}
\frac{|\boldsymbol{v}|}{|\boldsymbol{v}|+\gamma\phi'(\frac{|\boldsymbol{v}|}{\theta})}&=\frac{\theta\prox_{\frac{\gamma}{\theta}\phi}(|\boldsymbol{m}|/\theta)}{\theta\prox_{\frac{\gamma}{\theta}\phi}(|\boldsymbol{m}|/\theta)+\gamma\prox_{\frac{\theta}{\gamma}\phi^*}(|\boldsymbol{m}|/\gamma)}\nonumber\\
&= \frac{\theta\prox_{\frac{\gamma}{\theta}\phi}(|\boldsymbol{m}|/\theta)}{|\boldsymbol{m}|}\nonumber\\
&=1-\frac{\gamma}{|\boldsymbol{m}|}\prox_{\frac{\theta}{\gamma}\phi^*}\Big(\frac{|\boldsymbol{m}|}{\gamma}\Big),
\end{align*}
and the result follows from \eqref{e:vfctm}.
\end{itemize}
The proof is complete.
\end{proof}
\begin{remark}\
\label{r:1}
\begin{enumerate}
\item Note that, under the hypotheses on $\phi$ in 
Proposition~\ref{p:proxgen}, \eqref{e:phistarpos} and Fermat's 
theorem imply that $0\in\partial\phi^*(0)$ and hence, for every $\lambda>0$, we have $0=\prox_{\lambda\phi^*}0$.
Therefore, since, for every $\lambda>0$, $\prox_{\lambda \phi^*}$ 
is increasing and nonexpansive 
\cite[Proposition~24.31]{BausComb19}, we have, for every 
$\theta>0$
and $\gamma>0$,
\begin{equation}
\label{e:proxpos}
0=\prox_{\frac{\theta}{\gamma}\phi^*}(0)\le 
\prox_{\frac{\theta}{\gamma}\phi^*}\Big(\frac{|\boldsymbol{m}|}{\gamma}\Big)
\le 
\frac{|\boldsymbol{m}|}{\gamma}.
\end{equation}
Hence, the constant multiplying $\boldsymbol{m}$ in \eqref{e:vhat} is nonnegative.

\item In next section, the following nontrivial examples for $F$ satisfying the hypotheses of Proposition~\ref{p:proxgen} are used:
\begin{enumerate}
\item Set 
\begin{equation*}
F\colon \xi\mapsto
\begin{cases}
\xi\ln \xi,&\text{if}\:\: \xi>0;\\
0,&\text{if}\:\: \xi=0;\\
+\infty,&\text{if}\:\: \xi<0.
\end{cases}
\end{equation*}
Then $F$ is convex differentiable in $\RPP=\inte\dom F$,
$F'\colon ]0,+\infty[\to \RR$ is given by $F'\colon \xi\mapsto \ln(\xi)+1$, and $L_0=-\infty$.

\item Let $\eta>0$ with $\eta\neq 1$ and set 
\begin{equation*}
F\colon \xi\mapsto
\begin{cases}
\dfrac{\xi^{\eta}}{\eta(\eta-1)},&\text{if}\:\: \xi\ge0;\\
+\infty,&\text{if}\:\: \xi<0.
\end{cases}
\end{equation*}
$F$ is convex differentiable in $\RPP=\inte\dom F$, $F'\colon ]0,+\infty[\to \RR$ is given by $F'\colon \xi\mapsto \xi^{\eta-1}/(\eta-1)$, 
and
\begin{equation*}
L_0=
\begin{cases}
-\infty,&\text{if}\:\:\eta<1;\\
0,&\text{if}\:\:\eta>1.
\end{cases}
\end{equation*}
\end{enumerate}

\item Moreover, since, for every $\xi\in\RR$ and $\gamma>0$, 
$\phi^*(\prox_{\gamma\phi^*}(\xi))\le \phi^*(\xi)$ 
\cite[Proposition~12.27]{BausComb19}, it follows from 
\eqref{e:nonlineareqgen} that, for every 
$\theta\ge\prox_{\gamma F}\rho$,
\begin{equation}
\label{e:upperb}
R^{\phi}_{\rho,\boldsymbol{m}}(\theta)\ge \theta+\gamma 
F'(\theta)-\rho-\gamma\phi^*\bigg(\frac{|\boldsymbol{m}|}{\gamma}\bigg).
\end{equation}
Hence, since the right hand side of \eqref{e:upperb}
is zero for $$\overline{\theta}=\prox_{\gamma 
F}\big(\rho+\gamma\phi^*(|\boldsymbol{m}|/\gamma)\big),$$ we have
that the unique solution $\theta>0$ to the scalar equation $R^{\phi}_{\rho,\boldsymbol{m}}(t)=0$ satisfies $\theta\in[\prox_{\gamma 
F}\rho,\overline{\theta}]$. This scalar equation can be solved by standard root-finding methods \cite{Pres07}.

\end{enumerate}
\end{remark}

\begin{prop}
\label{p:proxPhic}
Let $F\colon\RR\to\left]-\infty,+\infty\right]$ be a lower semicontinuous convex function which is differentiable in $\inte\dom F=\RPP$ and such that $L_0:=\lim_{\xi\downarrow 0}F'(\xi)\in\RR\cup\{-\infty\}$.  Moreover, let 
$\Psi\colon\RR\times\RR^d\to\left]-\infty,+\infty\right]\colon (\rho,\boldsymbol{m})\mapsto 
F(\rho)$, set $\alpha>0$, $k>0$, and
\begin{equation*}
\Phi^{\alpha,k}_c\colon (\rho,\boldsymbol{m})\mapsto \begin{cases}
\frac{k^2}{\alpha}(\rho-\sqrt{\rho^2-|\frac{\boldsymbol{m}}{k}|^2}),\quad&\text{if}\:\:|\frac{\boldsymbol{m}}{k}|\le\rho;\\
+\infty,&\text{otherwise}.
\end{cases}
\end{equation*}
Then, for every $\gamma>0$, we have
\begin{equation}
\label{e:calcproxPhicr}
\prox_{\gamma(\Phi^{\alpha,k}_c+\Psi)}\colon 
(\rho,\boldsymbol{m})\mapsto
\begin{cases}
(0,0),&\text{if}\:\:\rho+\sqrt{\big(\frac{\gamma k^2}{\alpha}\big)^2+k^2|\boldsymbol{m}|^{2}}-\frac{\gamma k^2}{\alpha}
\le\gamma L_0;\\[2mm]
(\theta,\boldsymbol{v}),&\text{if}\:\:
\rho+\sqrt{\big(\frac{\gamma k^2}{\alpha}\big)^2+k^2|\boldsymbol{m}|^{2}}-\frac{\gamma k^2}{\alpha}
>\gamma L_0,
\end{cases}
\end{equation}
where $\theta\ge\prox_{\gamma F}\rho\ge 0$ is the 
unique solution to $R^c_{\rho,\boldsymbol{m}}(t)=0$ with
\begin{equation*}
\label{e:nonlineareq1Fc}
R^c_{\rho,\boldsymbol{m}}\colon
\!t\mapsto\!\!\left(\!\dfrac{(1+k^2)t+\!\gamma 
F'(t)\!-\!\rho+\!\frac{\gamma k^2}{\alpha}}{t+\gamma 
F'(t)-\rho+\frac{\gamma k^2}{\alpha}}\!\right)
\!\!\sqrt{\Big(t+\gamma F'(t)\!-\!\rho+\frac{\gamma k^2}{\alpha}\Big)
^{\!2}\!-\!\Big(\frac{\gamma k^2}{\alpha}\Big)^{\!2}}-k|\boldsymbol{m}|
\end{equation*} 
and 
\begin{equation}
\label{e:vhatc}
\boldsymbol{v}=
\left(\frac{\theta k^2}{(1+k^2)\theta+\gamma 
F'(\theta)-\rho+\frac{\gamma k^2}{\alpha}}\right)\boldsymbol{m}.
\end{equation}
\end{prop}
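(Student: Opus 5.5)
The plan is to obtain the result as a direct specialization of Proposition~\ref{p:proxgen}, case~\ref{p:proxgenii}, with $a=k$ and
\[
\phi(\xi)=\frac{k^2}{\alpha}\Big(1-\sqrt{1-\xi^2/k^2}\Big)\ \text{ for } |\xi|\le k,\qquad \phi(\xi)=+\infty\ \text{ otherwise}.
\]

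\textbf{Hypotheses and identification of $\Phi$.} For $\rho>0$ we have $\rho\,\phi(|\boldsymbol{m}|/\rho)=\frac{k^2}{\alpha}\big(\rho-\sqrt{\rho^2-|\boldsymbol{m}/k|^2}\big)$, which is finite exactly when $|\boldsymbol{m}/k|\le\rho$. For $\rho=0$ the constraint forces $\boldsymbol{m}=0$ and the value $0$. Hence $\Phi^{\alpha,k}_c$ coincides with $\Phi$ in \eqref{e:auxgen}. The function $\phi$ is even, convex and lower semicontinuous, and it vanishes only at $0$. Its derivative on $]-k,k[$ is
\[
\phi'(\xi)=\frac{\xi/\alpha}{\sqrt{1-\xi^2/k^2}},
\]
which tends to $\pm\infty$ as $\xi\to\pm k$. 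Its conjugate is $\phi^*(y)=\frac{k^2}{\alpha}\big(\sqrt{1+\alpha^2y^2/k^2}-1\big)$, consistent with $c^*_{\alpha,k}$. A one-line computation gives
\[
\gamma\phi^*(|\boldsymbol{m}|/\gamma)=\sqrt{(\gamma k^2/\alpha)^2+k^2|\boldsymbol{m}|^2}-\gamma k^2/\alpha,
\]
so the dichotomy in \eqref{e:calcproxPhicr} is exactly the one in Proposition~\ref{p:proxgen}.

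\textbf{Avoiding an explicit prox.} The remaining work is to rewrite $R^\phi_{\rho,\boldsymbol{m}}$ and \eqref{e:vhat} without computing $\prox_{\frac{t}{\gamma}\phi^*}$ in closed form, since that would lead to a quartic. Fix $t>0$ and set
\[
y=|\boldsymbol{m}|/\gamma,\quad p=\prox_{\frac{t}{\gamma}\phi^*}(y)\ge 0\ \text{(by \eqref{e:proxpos})},\quad B=\gamma k^2/\alpha,\quad A=B+\gamma\phi^*(p)\ge B.
\]
From the formula for $\phi^*$, $\sqrt{1+\alpha^2p^2/k^2}=A/B$, so $p=\frac{k}{\alpha}\sqrt{A^2-B^2}/B$. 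The prox optimality condition \eqref{e:caracprox} reads $y-p=\frac{t}{\gamma}(\phi^*)'(p)=\frac{t}{\gamma}\,\alpha p\,B/A$, that is, $y=p\,(A+tk^2)/A$. Substituting $p$ and using $\frac{k}{\alpha B}=\frac1{\gamma k}$ gives
\[
k|\boldsymbol{m}|=\frac{A+tk^2}{A}\sqrt{A^2-B^2}.
\]

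\textbf{Equivalence of root equations.} At a root of $R^\phi_{\rho,\boldsymbol{m}}$ we have $A=t+\gamma F'(t)-\rho+B$, so $t$ solves $R^c_{\rho,\boldsymbol{m}}(t)=0$. Conversely, I expect the main (mild) obstacle to be showing that $R^c_{\rho,\boldsymbol{m}}(t)=0$ forces $t+\gamma F'(t)-\rho=\gamma\phi^*(p)$, so that roots correspond and uniqueness transfers. The approach is to note that for fixed $t$ the map
\[
A\mapsto \frac{A+tk^2}{A}\sqrt{A^2-B^2}
\]
is strictly increasing on $[B,\infty[$ (it behaves like $A$ times a factor decreasing to $1$). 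I would verify this by differentiation, or by writing it as $\sqrt{A^2-B^2}+tk^2\sqrt{1-B^2/A^2}$, a sum of two increasing terms. Hence the value $A$ solving $k|\boldsymbol{m}|=\cdot$ is unique and must equal $B+\gamma\phi^*(p)$. One should also restrict attention to $t$ with $t+\gamma F'(t)-\rho\ge0$, i.e.\ $t\ge\prox_{\gamma F}\rho$, which is where the theorem places $\theta$; there $R^c_{\rho,\boldsymbol{m}}$ is well defined.

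\textbf{Formula for $\boldsymbol{v}$ and the case $\boldsymbol{m}=0$.} From \eqref{e:vhat} and $y=p(A+tk^2)/A$,
\[
1-\frac{\gamma}{|\boldsymbol{m}|}p=1-\frac{A}{A+\theta k^2}=\frac{\theta k^2}{A+\theta k^2},
\]
with $A+\theta k^2=(1+k^2)\theta+\gamma F'(\theta)-\rho+\gamma k^2/\alpha$. This is exactly \eqref{e:vhatc}. When $\boldsymbol{m}=0$, \eqref{e:vhatc} gives $\boldsymbol{v}=0$ and $R^c_{\rho,0}(t)=0$ reduces to $A=B$, i.e.\ $\theta=\prox_{\gamma F}\rho$, consistent with Proposition~\ref{p:proxgen}.
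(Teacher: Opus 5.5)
Your proposal is correct and follows the paper's proof. You specialize Proposition~\ref{p:proxgen} with $\phi=\phi_{\alpha,k}$ and $\dom\phi=[-k,k]$, then use the optimality condition for $\prox_{\frac{t}{\gamma}\phi^*}$ instead of a closed form to turn $R^{\phi}_{\rho,\boldsymbol{m}}(\theta)=0$ into $R^c_{\rho,\boldsymbol{m}}(\theta)=0$ and \eqref{e:vhat} into \eqref{e:vhatc}; this is the paper's computation written in your variable $A$ rather than the paper's $\chi=\gamma p$.

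Your extra observation that $A\mapsto\frac{A+tk^2}{A}\sqrt{A^2-B^2}$ is strictly increasing on $[B,+\infty[$ gives the converse. Namely, every root of $R^c_{\rho,\boldsymbol{m}}$ with $t\ge\prox_{\gamma F}\rho$ is a root of $R^{\phi}_{\rho,\boldsymbol{m}}$. The paper leaves this implicit when it asserts that the root of $R^c_{\rho,\boldsymbol{m}}$ is unique.
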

\begin{proof}
Set
\begin{equation}
\label{e:defphiak}
\phi_{\alpha,k}\colon \xi\mapsto 
\begin{cases}
\frac{k^2}{\alpha}\Big(1-\sqrt{1-\big(\frac{\xi}{k}\big)^2}\Big),\quad&\text{ if }\:\:|\xi|\le k;\\
+\infty,&\text{ otherwise}.
\end{cases}
\end{equation}
Then, $\dom\phi_{\alpha,k}=[-k,k]$, $\phi_{\alpha,k}$ is proper, even, convex, lower 
semicontinuous, differentiable in 
$]-k,k[$, $\phi_{\alpha,k}(0)=0$, and, for every $\xi\in\left]-k,k\right[$, 
$\phi_{\alpha,k}'(\xi)=\xi/(\alpha\sqrt{1-(\xi/k)^2})$, leading to 
$\lim_{\xi\to\pm k}\phi_{\alpha,k}'(\xi)=\pm\infty$.
Moreover, by noting that 
\begin{equation}
\label{e:aux44}
\Phi_c\colon(\rho,\boldsymbol{m})\mapsto
\begin{cases}
\rho\,\phi_{\alpha,k}\left(\dfrac{|\boldsymbol{m}|}{\rho}\right),&\text{if}\:\:\rho>0;\\
0,&\text{if}\:\:\rho=0\:\:\text{and}\:\: \boldsymbol{m}=0;\\
+\infty,&\text{otherwise},
\end{cases}
\end{equation}
Proposition~\ref{p:proxgen} and
\begin{equation}
\label{e:defphiaks}
\phi_{\alpha,k}^*\colon s\mapsto \sqrt{\Big(\frac{k^2}{\alpha}\Big)^2+(k s)^2}-\frac{k^2}{\alpha}
\end{equation}
yield \eqref{e:calcproxPhicr},
where $\theta\ge\prox_{\gamma F}\rho$ is the 
unique solution to $R^{\phi_{\alpha,k}}_{\rho,\boldsymbol{m}}(t)=0$, with $R^{\phi_{\alpha,k}}_{\rho,\boldsymbol{m}}$ being defined in \eqref{e:nonlineareqgen}. Hence, we have
\begin{equation}
\label{e:sqrteqr}
\theta+\gamma 
F'(\theta)-\rho+\frac{\gamma k^2}{\alpha}=\sqrt{\Big(\frac{\gamma k^2}{\alpha}\Big)^2+(k\chi)^2},
\end{equation}
where
\begin{align}
\label{e:nu2r}
\chi=\gamma\prox_{\frac{{\theta}}{\gamma}\phi_{\alpha,k}^*}\bigg(\frac{|\boldsymbol{m}|}{\gamma}\bigg)
\quad &\Leftrightarrow\quad
\frac{|\boldsymbol{m}|-\chi}{{\theta}}=(\phi_{\alpha,k}^*)'(\chi/\gamma)
=\frac{\chi k^2}{\sqrt{\big(\frac{\gamma k^2}{\alpha}\big)^2+(k\chi)^2}}\nonumber\\
&\Leftrightarrow\quad
\chi=\frac{\sqrt{\big(\frac{\gamma k^2}{\alpha}\big)^2+(k\chi)^2}}{\theta k^2+\sqrt{\big(\frac{\gamma k^2}{\alpha}\big)^2+(k\chi)^2}
}|\boldsymbol{m}|.
\end{align}
Observe that, in view of \eqref{e:proxpos} in Remark~\ref{r:1}, 
$\chi\ge 0$, which allows us to write \eqref{e:sqrteqr} equivalently as
\begin{equation}
\label{e:nu3r}
k\chi=\sqrt{\Big(\theta+\gamma 
F'({\theta})-\rho+\frac{\gamma k^2}{\alpha}\Big)^2-\Big(\frac{\gamma k^2}{\alpha}\Big)^2}.
\end{equation}
Therefore, combining 
\eqref{e:nu3r}, \eqref{e:nu2r}, and \eqref{e:sqrteqr}, we obtain 
$R^{\phi_{\alpha,k}}_{\rho,\boldsymbol{m}}({\theta})=R^c_{\rho,\boldsymbol{m}}({\theta})=0$. 
Finally, combining \eqref{e:nu2r}, \eqref{e:sqrteqr}, and 
\eqref{e:vhat} we obtain
\eqref{e:vhatc}.
\end{proof}

In view of Remark~\ref{r:1}, an upper bound for ${\theta}$ 
in the previous case is
$$\overline{\theta}=\prox_{\gamma 
F}\left(\rho+\sqrt{\Big(\frac{\gamma k^2}{\alpha}\Big)^2+k^2|\boldsymbol{m}|^2}-\frac{\gamma k^2}{\alpha}\right).$$

\begin{remark}

\

\begin{enumerate}
\item 
\label{r:i}
Suppose that $F=\iota_{\RP}$. Then, $F$ is differentiable in $\RPP$, for every $\xi>0$, $F'(\xi)=0$, $\lim_{\xi\downarrow 0}F'(\xi)=0$,
$\prox_{\gamma F}\colon \rho\mapsto\max\{0,\rho\}$, 
$\Phi^{\alpha,k}_c+F=\Phi^{\alpha,k}_c$, and  we deduce 
\begin{equation}
\label{e:proxF0}
\prox_{\gamma\Phi^{\alpha,k}_c}\colon 
(\rho,\boldsymbol{m})\mapsto
\begin{cases}
(0,0),&\text{if}\:\:\rho+\sqrt{\big(\frac{\gamma k^2}{\alpha}\big)^2+k^2|\boldsymbol{m}|^{2}}-\frac{\gamma k^2}{\alpha}\le 0;\\
({\theta},\boldsymbol{v}),&\text{if}\:\:\rho+\sqrt{\big(\frac{\gamma k^2}{\alpha}\big)^2+k^2|\boldsymbol{m}|^{2}}-\frac{\gamma k^2}{\alpha}>0,
\end{cases}
\end{equation}
where ${\theta}\ge\max\{0,\rho\}$ is the unique solution to 
$R^c_{\rho,\boldsymbol{m}}(t)=0$ 
with
\begin{align}
\label{e:R0}
R^c_{\rho,\boldsymbol{m}}\colon
t&\mapsto 
\left(\dfrac{(1+k^2)t-\rho+\frac{\gamma k^2}{\alpha}}{t-\rho+\frac{\gamma k^2}{\alpha}}\right)
\sqrt{\Big(t-\rho+\frac{\gamma k^2}{\alpha}\Big)^2-\Big(\frac{\gamma k^2}{\alpha}\Big)^2}-k|\boldsymbol{m}|
\end{align} 
and 
\begin{equation}
\boldsymbol{v}=
\left(\frac{\theta k^2}{(1+k^2)\theta-\rho+\frac{\gamma k^2}{\alpha}}\right)\boldsymbol{m}.
\end{equation}

This particular instance can also be derived from \cite[Theorem~3.1]{CoJMAA} and \cite[Proposition~2.3]{CoEJS}.

\item Set $F=\iota_{\RP}$, consider the function $\phi_{\alpha,k}$ defined in \eqref{e:defphiak}, and its conjugate $\phi_{\alpha,k}^*$ in \eqref{e:defphiaks}.
Then, we have
\begin{equation}
\label{e:limitalpha}
\widetilde{\phi}_k:=\lim_{\alpha\to+\infty}\phi_{\alpha,k}=
\iota_{[-k,k]}\quad\text{and}\quad \widetilde{\phi}^*_k=\lim_{\alpha\to+\infty}\phi_{\alpha,k}^*=k|\cdot|,
\end{equation}
pointwise and, from \eqref{e:aux44}, we obtain
$$\widetilde{\Phi}^k_{c}(\rho,\boldsymbol{m}):=\lim_{\alpha\to+\infty}\Phi^{\alpha,k}_c(\rho,\boldsymbol{m})
=\begin{cases}
0,&\text{if}\:\:|\boldsymbol{m}|\le k\rho;\\
+\infty,&\text{otherwise}.
\end{cases}$$
Note that, \eqref{e:nu2r} is no longer valid in the limit case when $\alpha\to+\infty$. Indeed, 
it follows from \eqref{e:limitalpha} that $\widetilde{\phi}_{k}^*$ is no longer differentiable and, as a consequence, \eqref{e:R0} is not valid in the limit case. By recalculating from 
\eqref{e:nu2r} using the subdifferential of 
$\widetilde{\phi}_{k}^*$, we obtain 
\begin{equation*}
\label{e:proxF0kinf2}
\prox_{\gamma\widetilde{\Phi}^k_c}\colon 
(\rho,\boldsymbol{m})\mapsto
\begin{cases}
(0,0),&\text{if}\:\:\rho+k|\boldsymbol{m}|\le 0;\\
(\rho,\boldsymbol{m}),&\text{if}\:\:\rho+k|\boldsymbol{m}|>0\:\:
\text{and}\:\:|\boldsymbol{m}|\le k\rho;\\
\big(\frac{\rho+k|\boldsymbol{m}|}{1+k^2}\big)\Big(1,\frac{k\boldsymbol{m}}{|\boldsymbol{m}|}
\Big),&\text{if}\:\:\rho+k|\boldsymbol{m}|>0\:\:
\text{and}\:\:|\boldsymbol{m}|> k\rho,
\end{cases}
\end{equation*}
which is the projection onto the cone $\menge{(\rho,\boldsymbol{m})\in\RR\times\RR^d}{|\boldsymbol{m}|\le k\rho}$ (see, e.g., \cite{projcone}).

\item Set $F=\iota_{\RP}$
and consider the function $\phi_{\alpha,k}$ defined in \eqref{e:defphiak}.
Then, for every $\xi\in\RR$, we have
$$\lim_{k\to+\infty}\phi_{\alpha,k}(\xi)=
\lim_{k\to+\infty}\frac{\xi^2/\alpha}{1+
\sqrt{1-\big(\frac{\xi}{k}\big)^2}}=\frac{\xi^2}{2\alpha},$$
and we conclude from \eqref{e:aux44} that
$$\widehat{\Phi}_{c}(\rho,\boldsymbol{m}):=\lim_{k\to+\infty}\Phi^{\alpha,k}_c(\rho,\boldsymbol{m})
=\begin{cases}
\dfrac{|\boldsymbol{m}|^2}{2\alpha\rho},&\text{if}\:\:\rho>0;\\[3mm]
0,&\text{if}\:\:\rho=0\:\:\text{and}\:\: \boldsymbol{m}=0;\\
+\infty,&\text{otherwise}.
\end{cases}$$
Moreover, by taking $k\to+\infty$, we deduce from \eqref{e:proxF0} and \eqref{e:R0} that
\begin{equation*}
\label{e:proxF0kinf}
\prox_{\gamma\widehat{\Phi}_c}\colon 
(\rho,\boldsymbol{m})\mapsto
\begin{cases}
(0,0),&\text{if}\:\:\rho+\frac{\alpha|\boldsymbol{m}|^{2}}{2\gamma}\le 0;\\
\Big({\theta},\left(\frac{\theta}{\theta+\frac{\gamma}{\alpha}}\right)\boldsymbol{m}
\Big),&\text{if}\:\:\rho+\frac{\alpha|\boldsymbol{m}|^{2}}{2\gamma}>0,
\end{cases}
\end{equation*}
where $\theta\ge\max\{0,\rho\}$ is the unique solution to 
$$\left(t+\frac{\gamma}{\alpha}\right)^2(t-\rho)=\frac{\gamma|\boldsymbol{m}|^2}{2\alpha}.$$
This is a particular case of the following proposition.
\end{enumerate}
\end{remark}

\begin{prop}
\label{p:proxPhip}
Let $F\colon\RR\to\left]-\infty,+\infty\right]$ be a lower semicontinuous convex function which is differentiable in $\inte\dom F=\RPP$ and such that $L_0:=\lim_{\xi\downarrow 0}F'(\xi)\in\RR\cup\{-\infty\}$.  Moreover, let 
$\Psi\colon\RR\times\RR^d\to\left]-\infty,+\infty\right]\colon (\rho,\boldsymbol{m})\mapsto 
F(\rho)$, set $p>1$, set $q=p/(p-1)$, and set
\begin{equation*}
\Phi_p\colon (\rho,\boldsymbol{m})\mapsto \begin{cases}
\dfrac{|\boldsymbol{m}|^q}{q\rho^{q-1}},&\text{if}\:\:\rho>0;\\[3mm]
0,&\text{if}\:\:\rho=0\:\:\text{and}\:\: \boldsymbol{m}=0;\\
+\infty,&\text{otherwise}.
\end{cases}
\end{equation*}
Then, for every $\gamma>0$, we have
\begin{equation}
\label{e:calcproxPhip}
\prox_{\gamma(\Phi_p+\Psi)}\colon 
(\rho,\boldsymbol{m})\mapsto
\begin{cases}
(0,0),&\text{if}\:\:\rho+\dfrac{|\boldsymbol{m}|^p}{p\gamma^{p-1}}
\le\gamma L_0;\\[3mm]
({\theta},{\boldsymbol{v}}),&\text{if}\:\:
\rho+\dfrac{|\boldsymbol{m}|^p}{p\gamma^{p-1}}
> \gamma L_0,
\end{cases}
\end{equation}
where ${\theta}\ge\prox_{\gamma F}\rho\ge 0$ is the 
unique solution to $R^p_{\rho,\boldsymbol{m}}(t)=0$ with
\begin{equation}
\label{e:nonlineareq1Fp}
R^p_{\rho,\boldsymbol{m}}\colon
t\mapsto \Big(\gamma(t+\gamma 
F'(t)-\rho)^{1/p}+t(p/\gamma)^{1-2/p}(t+\gamma 
F'(t)-\rho)^{1-1/p}\Big)^p-\frac{\gamma|\boldsymbol{m}|^p}{p}
\end{equation} 
and 
\begin{equation}
\label{e:vhatp}
{\boldsymbol{v}}=
\left(\frac{{\theta}}{{\theta}+\gamma^{2/q}p^{1-2/q}({\theta}+\gamma
 F'({\theta})-\rho)^{1-2/q}}\right)\boldsymbol{m}.
\end{equation}
\end{prop}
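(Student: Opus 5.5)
The plan is to obtain Proposition~\ref{p:proxPhip} as a direct specialization of Proposition~\ref{p:proxgen}, in the same way that Proposition~\ref{p:proxPhic} was derived, taking $\phi=|\cdot|^q/q$.

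\textbf{Step 1: check the hypotheses.} The function $\phi\colon\xi\mapsto|\xi|^q/q$ is even, convex, lower semicontinuous and differentiable on $\RR$, and it vanishes only at $0$. Its derivative $\phi'(\xi)=|\xi|^{q-2}\xi$ tends to $\pm\infty$ as $\xi\to\pm\infty$, so case~\ref{p:proxgeni} applies. Since $\Phi_p$ is exactly the perspective \eqref{e:auxgen} of this $\phi$, Proposition~\ref{p:proxgen} applies.

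\textbf{Step 2: the threshold.} The conjugate is $\phi^*=|\cdot|^p/p$. Hence
\[
\gamma\phi^*\big(|\boldsymbol{m}|/\gamma\big)=\frac{|\boldsymbol{m}|^p}{p\gamma^{p-1}},
\]
which gives the dichotomy in \eqref{e:calcproxPhip}.

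\textbf{Step 3: the prox of $\phi^*$.} Fix $t>0$ and set
\[
\chi=\prox_{\frac{t}{\gamma}\phi^*}\big(|\boldsymbol{m}|/\gamma\big).
\]
By \eqref{e:proxpos} we have $\chi\ge0$, and by \eqref{e:caracprox} $\chi$ is characterized by $\chi+\tfrac{t}{\gamma}\chi^{p-1}=|\boldsymbol{m}|/\gamma$, that is,
\[
\gamma\chi+t\chi^{p-1}=|\boldsymbol{m}|.
\]
This equation has no closed-form solution for general $p$. So instead of computing $\chi$, I will eliminate it using the equation $R^{\phi}_{\rho,\boldsymbol{m}}(t)=0$.

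\textbf{Step 4: eliminate $\chi$.} Write $s=t+\gamma F'(t)-\rho$. At the root, $R^{\phi}_{\rho,\boldsymbol{m}}(t)=0$ reads $s=\gamma\chi^p/p$. Because $s\ge0$ and $\chi\ge0$, this is equivalent to
\[
\chi=\Big(\frac{p\,s}{\gamma}\Big)^{1/p}.
\]
Substituting into $\gamma\chi+t\chi^{p-1}=|\boldsymbol{m}|$ and collecting powers gives
\[
|\boldsymbol{m}|=\gamma^{1-1/p}p^{1/p}s^{1/p}+t\,(p/\gamma)^{1-1/p}s^{1-1/p}.
\]
Raising both sides to the power $p$ and multiplying by $\gamma/p$ yields exactly $R^p_{\rho,\boldsymbol{m}}(t)=0$, with the factor $(p/\gamma)^{1-2/p}$ appearing as stated.

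\textbf{Step 5: equivalence of the root equations.} The reduction in Step 4 is reversible. Given any $t$ with $s\ge0$, define $\chi$ by $\chi=(ps/\gamma)^{1/p}$. Then $R^p_{\rho,\boldsymbol{m}}(t)=0$ says precisely that this $\chi$ satisfies the prox characterization of Step 3, so it is the prox, and hence $R^{\phi}_{\rho,\boldsymbol{m}}(t)=0$. Roots with $s\ge0$ are therefore the same for both equations. I expect this bookkeeping to be the main obstacle: one must verify that both $\chi$ and $s$ are nonnegative on the relevant range. Inequality \eqref{e:lowboundthet}, i.e.\ $\theta\ge\prox_{\gamma F}\rho$ with $\id+\gamma F'$ increasing, guarantees $s\ge0$, so the unique root $\theta$ from Proposition~\ref{p:proxgen} is the unique such root of $R^p_{\rho,\boldsymbol{m}}$.

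\textbf{Step 6: the formula for $\boldsymbol{v}$.} For $\boldsymbol{m}\ne0$ we have $\chi>0$. Using \eqref{e:vhat} together with $\gamma\chi+\theta\chi^{p-1}=|\boldsymbol{m}|$,
\[
1-\frac{\gamma\chi}{|\boldsymbol{m}|}=\frac{\theta\chi^{p-1}}{\gamma\chi+\theta\chi^{p-1}}=\frac{\theta}{\theta+\gamma\chi^{2-p}}.
\]
Inserting $\chi^{2-p}=(ps/\gamma)^{(2-p)/p}$ and using $2/q=2-2/p$ gives
\[
\gamma\chi^{2-p}=\gamma^{2/q}p^{1-2/q}s^{1-2/q},
\]
which is \eqref{e:vhatp}. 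For $\boldsymbol{m}=0$ both \eqref{e:vhat} and \eqref{e:vhatp} give $\boldsymbol{v}=0$, reading the formula as $0$ when $s=0$ and $p>2$.
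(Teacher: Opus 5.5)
Your proposal is correct and follows essentially the same route as the paper: you specialize Proposition~\ref{p:proxgen} to $\phi=|\cdot|^q/q$ with $\phi^*=|\cdot|^p/p$, then eliminate the prox value $\chi$ by combining the root equation $s=\gamma\phi^*(\chi)$ with the prox optimality condition. The paper works with $\gamma$ times your $\chi$, which is only a normalization. Your Step~5, showing that the roots of $R^{\phi}_{\rho,\boldsymbol{m}}$ and $R^p_{\rho,\boldsymbol{m}}$ coincide where $s\ge 0$ so that uniqueness transfers, makes explicit a point the paper leaves implicit, as does your treatment of the case $\boldsymbol{m}=0$.
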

\begin{proof}
Set $\phi\colon \xi\mapsto |\xi|^q/q$, which is proper, convex, even, 
differentiable in $\dom\phi=\RR$, $\phi(0)=0$, $\phi'\colon \xi\mapsto 
|\xi|^{q-2}\xi$, and we have
$\lim_{\xi\to\pm\infty}\phi'(\xi)=\pm\infty$.
Therefore, noting that 
\begin{equation*}
\label{e:aux44p}
\Phi_p\colon(\rho,\boldsymbol{m})\mapsto
\begin{cases}
\rho\,\phi\left(\dfrac{|\boldsymbol{m}|}{\rho}\right),\quad&\text{if}\:\:\rho>0;\\
0,&\text{if}\;\;\rho=0\:\:\text{and}\:\: \boldsymbol{m}=0;\\
+\infty,&\text{ otherwise}
\end{cases}
\end{equation*}
and $\phi^*\colon s\mapsto |s|^p/p$,
Proposition~\ref{p:proxgen} 
yields \eqref{e:calcproxPhip},
where ${\theta}\ge\prox_{\gamma F}\rho\ge0$ is the 
unique solution to $R^{\phi}_{\rho,\boldsymbol{m}}(t)=0$, i.e., it satisfies
\begin{equation}
\label{e:sqrteqp}
{\theta}+\gamma 
F'({\theta})-\rho=\frac{\chi^p}{p\gamma^{p-1}},
\end{equation}
where
\begin{align}
\label{e:nu2p}
\chi=\gamma\prox_{\frac{{\theta}}{\gamma}\phi^*}\bigg(\frac{|\boldsymbol{m}|}{\gamma}\bigg)
\quad &\Leftrightarrow\quad
\frac{|\boldsymbol{m}|-\chi}{{\theta}}=(\phi^*)'\left(\frac{\chi}{\gamma}\right)=\left(\frac{\chi}{\gamma}\right)^{p-1}\nonumber\\
&\Leftrightarrow\quad
\chi=\frac{|\boldsymbol{m}|}{1+\frac{{\theta}}{\gamma}\left(\frac{\chi}{\gamma}\right)^{p-2}}
\end{align}
in view of \eqref{e:proxpos} in Remark~\ref{r:1}.
Now, since \eqref{e:sqrteqp} is equivalent to
\begin{equation*}
 \chi=\gamma^{1/q}p^{1/p}({\theta}+\gamma 
 F'({\theta})-\rho)^{1/p},
\end{equation*}
we obtain from \eqref{e:nu2p} that 
\begin{equation}
\label{e:vfinp}
\chi=\frac{|\boldsymbol{m}|}{1+{\theta}\gamma^{-2/q}p^{1-2/p}({\theta}+\gamma
 F'({\theta})-\rho)^{1-2/p}}.
\end{equation}
Using \eqref{e:vfinp} in \eqref{e:sqrteqp} we conclude from 
\eqref{e:nonlineareq1Fp} that $R^\phi_{\rho,\boldsymbol{m}}({\theta})=R^p_{\rho,\boldsymbol{m}}({\theta})=0$. 
Finally, \eqref{e:vhatp} follows from \eqref{e:vhat}, \eqref{e:nu2p}, 
and \eqref{e:vfinp} noting that 
$2/p-1=1-2/q$.
\end{proof}

In view of Remark~\ref{r:1}, an upper bound for ${\theta}$ 
in the last case is
$$\overline{\theta}=\prox_{\gamma 
F}\Big(\rho+\frac{|\boldsymbol{m}|^p}{p\gamma^{p-1}}\Big).$$
Note that the result in Proposition~\ref{p:proxPhip} is essentially done in \cite{BAKS}.

\begin{remark}
Suppose that $F=\iota_{\RP}$. Then, $F$ is differentiable in $\RPP$, for every $\xi>0$, $F'(\xi)=0$, $\lim_{\xi\downarrow 0}F'(\xi)=0$,
$\prox_{\gamma F}\colon \rho\mapsto\max\{0,\rho\}$, 
$\Phi_p+F=\Phi_p$, and we deduce
\begin{equation*}
\prox_{\gamma\Phi_p}\colon 
(\rho,\boldsymbol{m})\mapsto
\begin{cases}
(0,0),&\text{if}\:\:\rho+\dfrac{|\boldsymbol{m}|^p}{p\gamma^{p-1}}\le0;\\[3mm]
\bigg({\theta},\bigg(\dfrac{{\theta}}{{\theta}+\gamma^{2/q}p^{1-2/q}({\theta}-\rho)^{1-2/q}}
\bigg)\boldsymbol{m}\bigg),&\text{if}\:\:\rho+\dfrac{|\boldsymbol{m}|^p}{p\gamma^{p-1}}> 0,
\end{cases}
\end{equation*}
where ${\theta}\ge\max\{0,\rho\}$ is the unique solution to 
$R^p_{\rho,\boldsymbol{m}}(t)=0$ with
\begin{equation*}
R^p_{\rho,\boldsymbol{m}}\colon
t\mapsto 
\Big(\gamma(t-\rho)^{1/p}+t(p/\gamma)^{1-2/p}(t-\rho)^{1/q}\Big)^p-\frac{\gamma|\boldsymbol{m}|^p}{p}.
\end{equation*}
This particular instance can also be derived from \cite[Theorem~3.1]{CoJMAA} and \cite[Proposition~2.3]{CoEJS}.
Note that, when $p=q=2$, we recover \cite[Proposition~1]{PPO}.
\end{remark}

\bigskip

\section{Numerical Results}

\subsection{$p$-Laplacian}
Let us first focus on the family of doubly nonlinear diffusion problems of the form
\begin{equation}\label{dnl}
\left\{
\begin{array}{rl}
 \displaystyle \frac{\partial\rho}{\partial t} = \nabla\cdot \left[\left|\nabla \rho^m \right|^{p-2}\nabla \rho^m \right]\,,&
 x\in\Omega\subset\RR^d, t>0\vspace{.3cm}\\
\rho(t=0)=\rho_0 \,,&x\in\Omega \subset\RR^d
\end{array}
\right.
\end{equation}
with no-flux boundary conditions when posed on a bounded domain $\Omega$, $1< p< \infty$, and $m\geq m_c:=\max\left(\frac{d-p}{d(p-1)},0\right)$. This family corresponds to the cost $c(x)=|x|^q/q$, with $q$ the conjugate exponent of $p$, and the internal energy
\begin{equation*}\label{energydensity}
U(s)= \left\{
\begin{array}{lcl}
\displaystyle \frac{1}{p-1}s\ln s,&\mbox{if}&\displaystyle  m=\frac{1}{p-1};\\ \\
\displaystyle \frac{ms^\gamma}{\gamma(\gamma -1)},
\;\gamma=m+\frac{p-2}{p-1}, &\mbox{if}&\displaystyle  m\neq
\frac{1}{p-1}.
\end{array}
\right. 
\end{equation*}
Particular cases include the porous medium and the $p$-Laplacian equations; see \cite{Vazquez,Vaz2} and the references therein. On bounded domains with Neumann boundary conditions these equations equilibrate to the average of the initial data, while on the whole space they evolve towards self-similar Barenblatt-type profiles in the mass-conservation range $m> m_c$ \cite{Agu2,ABC,Vaz2}.

The behaviour at the boundary of the support, as well as the decay at infinity on the whole space, depends on both $m$ and $p$. The finite speed of propagation regime corresponds to $m(p-1)>1$, yielding compactly supported solutions; the fast-diffusion regime corresponds to $m(p-1)<1$ with $m>m_c$, yielding heavy tails \cite{Agu2,ABC,Vaz2}. The Barenblatt self-similar solution reads
\begin{equation}\label{barenblattsol}
\rho_B(t,x)=\frac{1}{t^{d/\delta_p}}u_B\left(\frac{x}{t^{1/\delta_p}}\right),
\end{equation}
with $\delta_p:=d(p-1)(m-m_c)>0$ and
\begin{equation}\label{barenblattprofile}
u_B(y)= \left\{
\begin{array}{lcl}
\displaystyle \frac{1}{\sigma}\,\mbox{exp}\left(-\frac{p-1}{q}\frac{|y|^q}{\delta_p^{1/(p-1)}}\right), &\mbox{if}& \displaystyle m=\frac{1}{p-1};\\
\displaystyle\left(D_*-\frac{m(p-1)-1}{mp}\frac{|y|^q}{\delta_p^{1/(p-1)}}\right)_+^{\frac{p-1}{m(p-1)-1}},
&\mbox{if}&\displaystyle m\neq \frac{1}{p-1},
\end{array}
\right.
\end{equation}
where $\sigma$ and $D_*$ are determined by mass conservation,
$$
\|u_B\|_{L^1(\RR^d)}= \|\rho_B(t)\|_{L^1(\RR^d)}=\|\rho_0\|_{L^1(\RR^d)}=1.
$$
Convergence of the JKO scheme for this family has been established in \cite{Agu,AGS,CS}.

We apply the numerical scheme \eqref{eq:Yan_step_1}--\eqref{eq:Yan_step_3} to \eqref{dnl} in one dimension for three combinations of $m$ and $p$, with results reported in Figures~\ref{fig:pLap-1}--\ref{fig:pLap-3}. In all three cases the reference solution is given by \eqref{barenblattsol}--\eqref{barenblattprofile}, with $\sigma \approx 2.1495$ for $m=0.5$, $p=3$; $D_* \approx 0.6646932161$ for $m=1$, $p=3$; and $D_* \approx 1.12636223$ for $m=0.25$, $p=3$. Initial data are set by \eqref{barenblattsol}--\eqref{barenblattprofile} at $t=0.01$ or $t=0.001$, with matching time step as indicated in each caption. Each figure shows the evolution of the numerical solution overlaid with the analytical profile, alongside the number of Chambolle--Pock iterations per JKO step. When the solution is smooth the iteration count decreases over time as the solution approaches a steady state; when the solution has non-smooth transitions at the boundary of the support, the iteration count fluctuates.

\begin{figure}[h!]
    \centering
    \includegraphics[width = 0.49\textwidth]{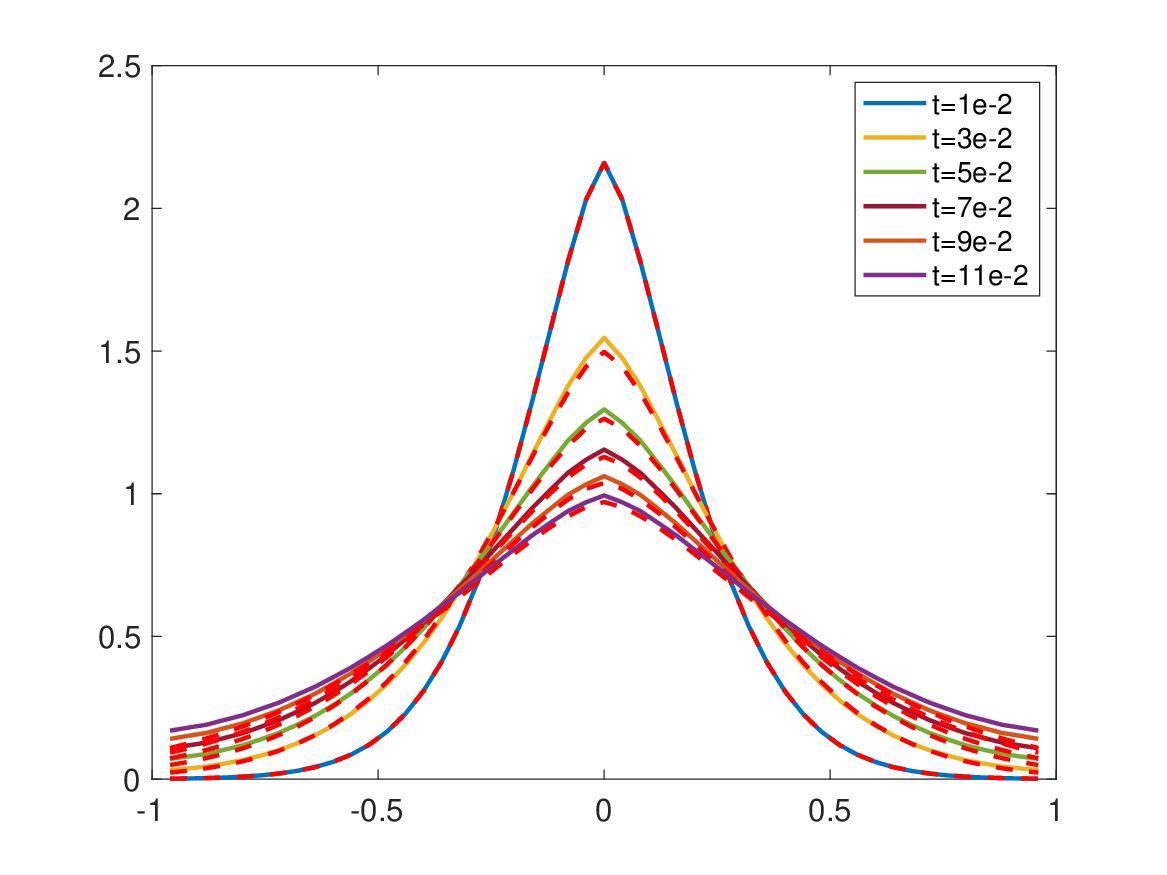}
    \includegraphics[width = 0.49\textwidth]{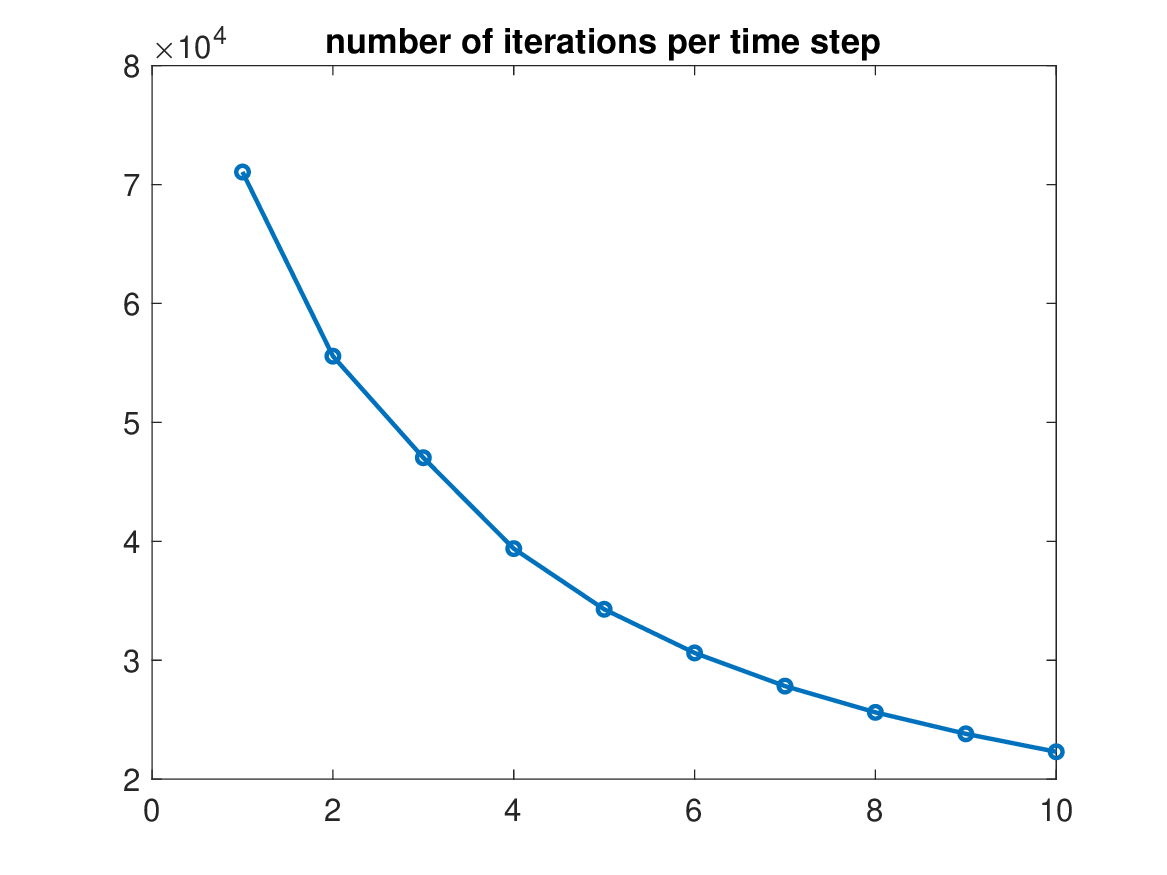}
    \caption{$p$-Laplacian case. Here $m = 0.5$, $p = 3$, so $m = \frac{1}{p-1}$. Numerical parameters: $\Delta x = 0.04$, $\Delta t = 0.01$.}
    \label{fig:pLap-1}
\end{figure}

The primal-dual framework developed in the previous sections admits two variants, which we refer to as the \emph{separate} and \emph{joint} proximal operator formulations. In the separate formulation, the term $\nabla \Psi$ associated with the internal energy is handled as an explicit gradient step within the iterations \eqref{eq:Yan_step_1}--\eqref{eq:Yan_step_3}, and the proximal operator $\prox_{\gamma \Phi_c}$ is computed via Proposition~\ref{p:proxgen}. In the joint formulation, $\Psi$ is absorbed into a single proximal operator $\prox_{\gamma(\Phi_c + \Psi)}$, whose closed-form characterisation is provided by Propositions~\ref{p:proxPhip} and~\ref{p:proxPhic} for the $p$-Laplacian and relativistic heat perspective functions, respectively. Figure~\ref{fig:pLap-2} illustrates that both variants produce essentially indistinguishable results: the Chambolle--Pock iteration counts coincide and the discrepancies between the computed densities are minimal.

\begin{figure}[h!]
    \centering
    \includegraphics[width =\linewidth]{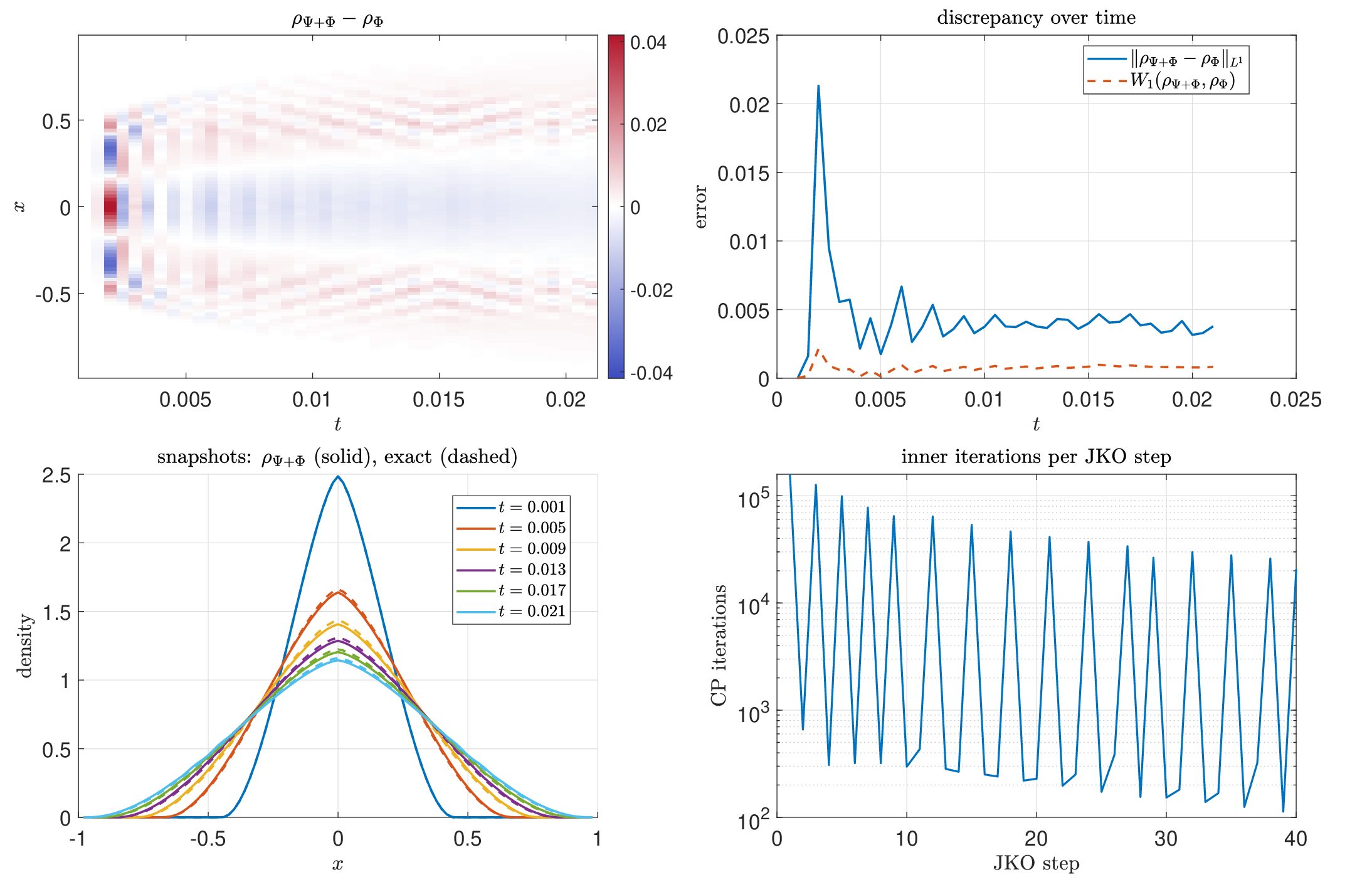}
    \caption{$p$-Laplacian case. Here $m = 1$, $p = 3$, so $m(p-1)>1$ (finite speed of propagation regime). Numerical parameters: $\Delta x = 0.02$, $\Delta t = 5\times10^{-4}$. Comparison of the joint and separate proximal operator formulations: density profiles and Chambolle--Pock iteration counts are indistinguishable between the two.}
    \label{fig:pLap-2}
\end{figure}

\begin{figure}[h!]
    \centering
    \includegraphics[width = 0.49\textwidth]{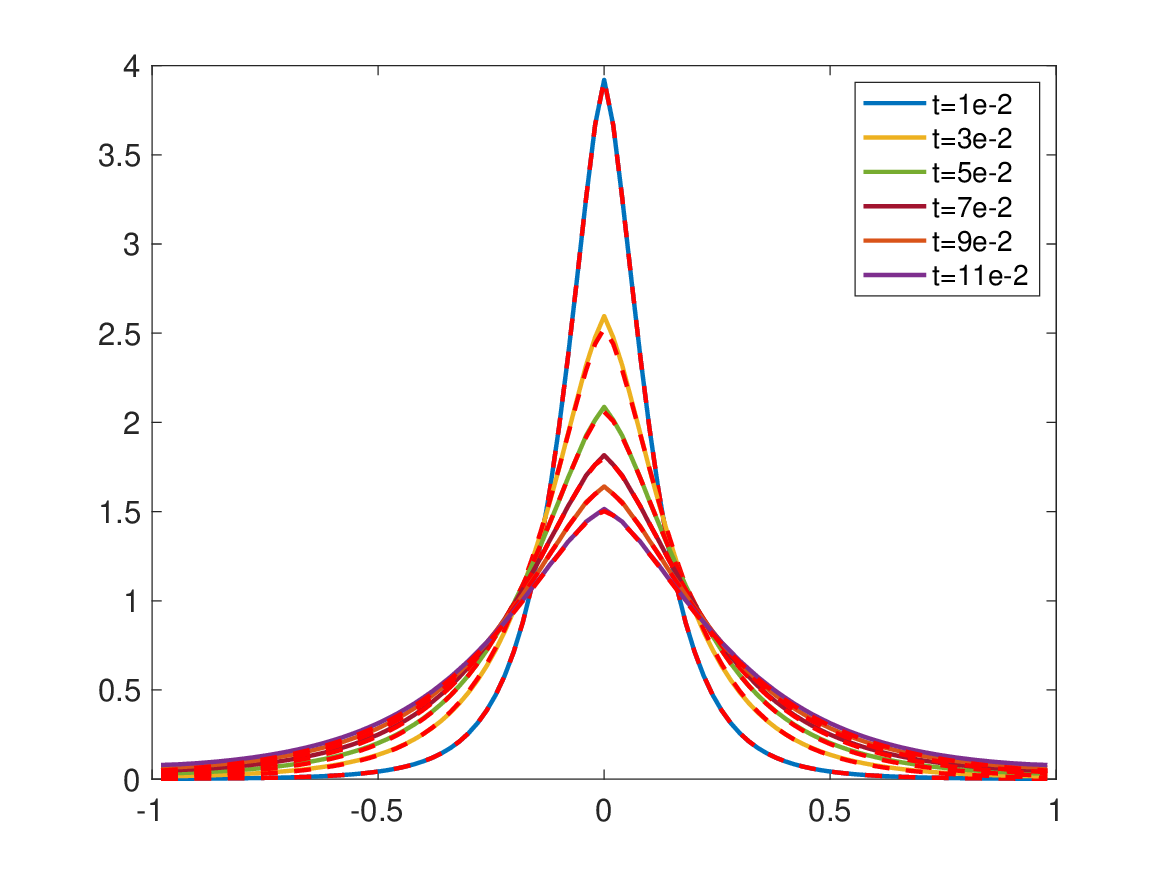}
    \includegraphics[width = 0.49\textwidth]{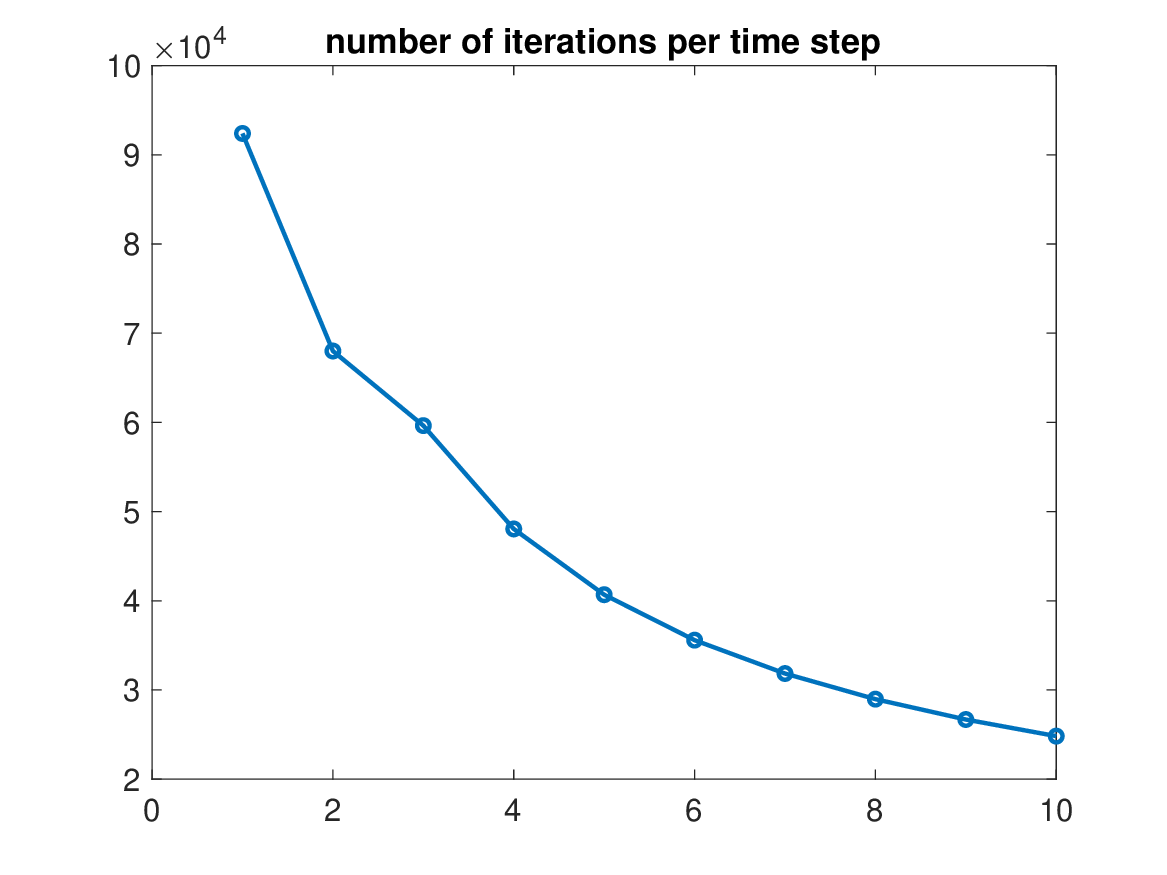}
    \caption{$p$-Laplacian case. Here $m = 0.25$, $p = 3$, so $m(p-1)<1$ (infinite speed of propagation regime). Discretization: $\Delta x = 0.02$, $\Delta t = 0.01$.}
    \label{fig:pLap-3}
\end{figure}
First-order accuracy in $\Delta t$ is verified in Figure~\ref{fig:accuracy} through relative error.
\begin{figure}[h!]
    \centering
    \includegraphics[width=0.5\linewidth]{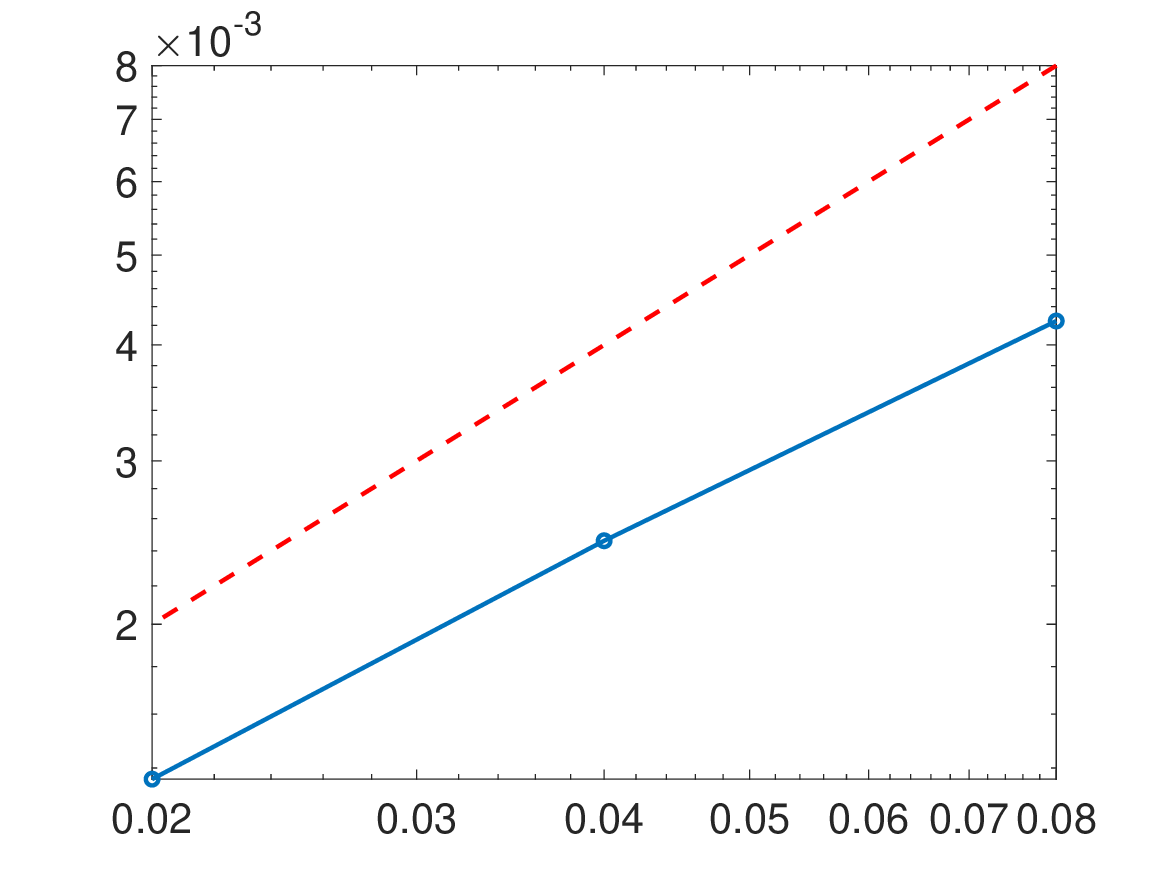}
    \caption{Error in $\Delta t$, for $m = 0.5$, $p = 3$, $\Delta x = 0.04$, and $\Delta t \in \{0.08, 0.04, 0.02, 0.01\}$.}
    \label{fig:accuracy}
\end{figure}
Figures~\ref{fig:pLap-2bump-3} and~\ref{fig:pLap-2bump-1} show the evolution of two initial bumps, taken as linear combinations of the profiles in \eqref{barenblattsol} centred at $x=0$ and $x=1$, merging into a single bump while diffusing towards the constant steady state. The corresponding entropy evolution is reported in Figure~\ref{entropy_twobump}.

\begin{figure}[h!]
    \centering
    \includegraphics[width=0.48\linewidth]{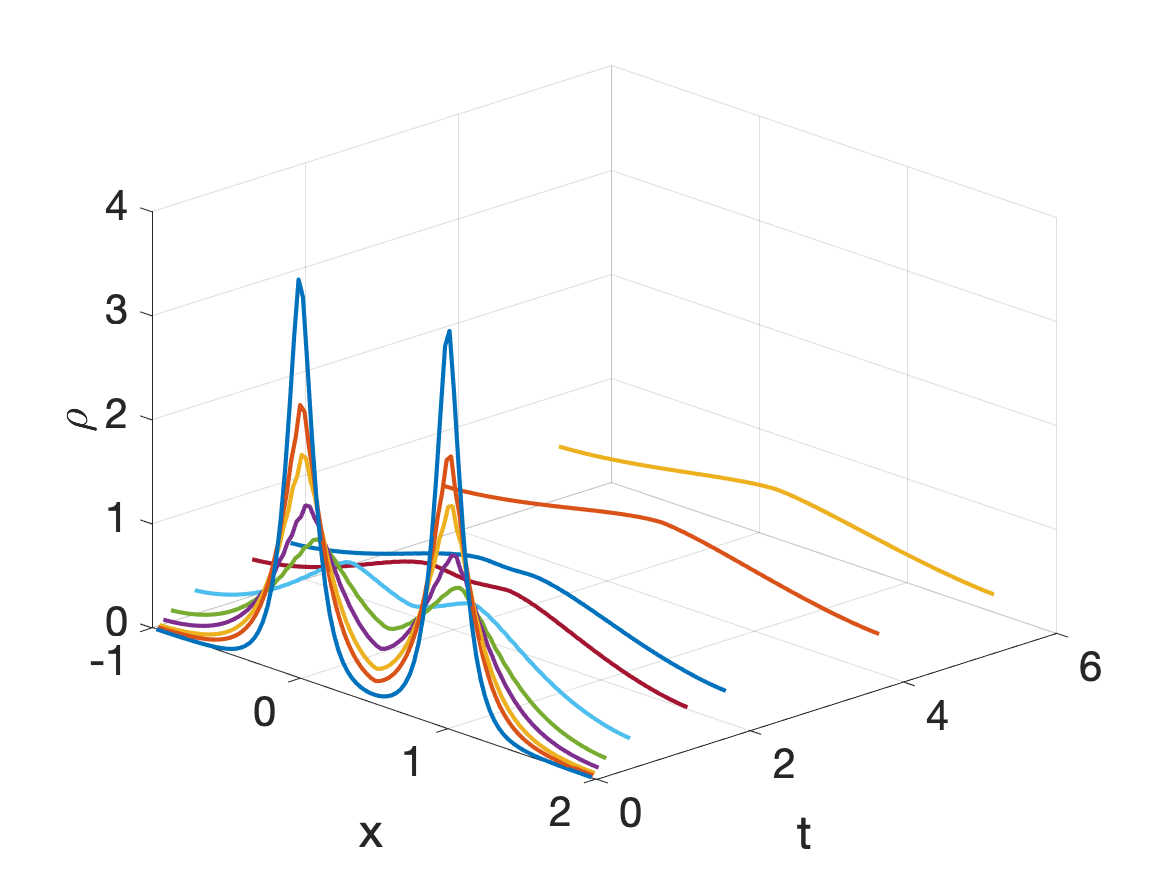}
    \includegraphics[width=0.48\linewidth]{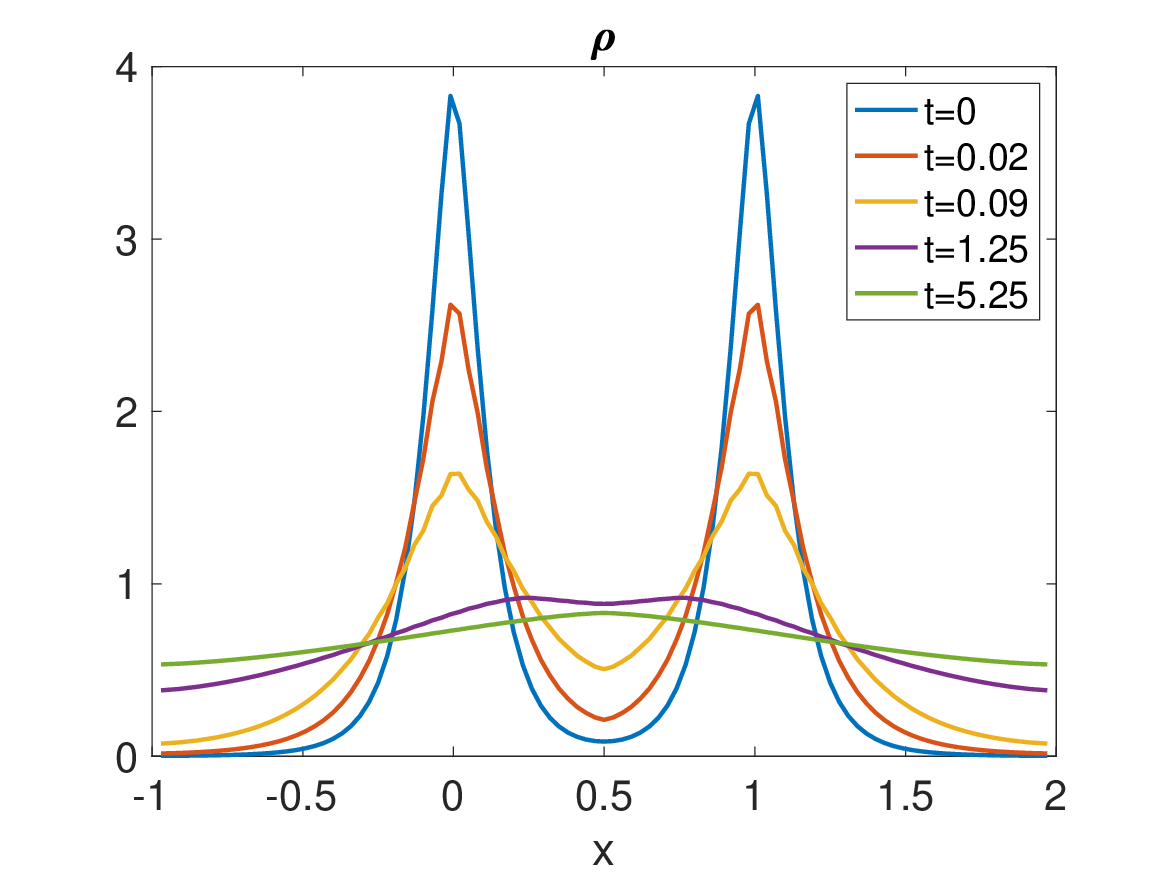}
    \caption{$p$-Laplacian case, two-bump initial data. Here $m = 0.25$, $p = 3$ (infinite speed of propagation). Discretization: $\Delta x = 0.03$; $\Delta t = 0.01$ up to $t=0.3$, $\Delta t = 0.05$ thereafter.}
    \label{fig:pLap-2bump-3}
\end{figure}

\begin{figure}[h!]
    \centering
    \includegraphics[width=0.48\linewidth]{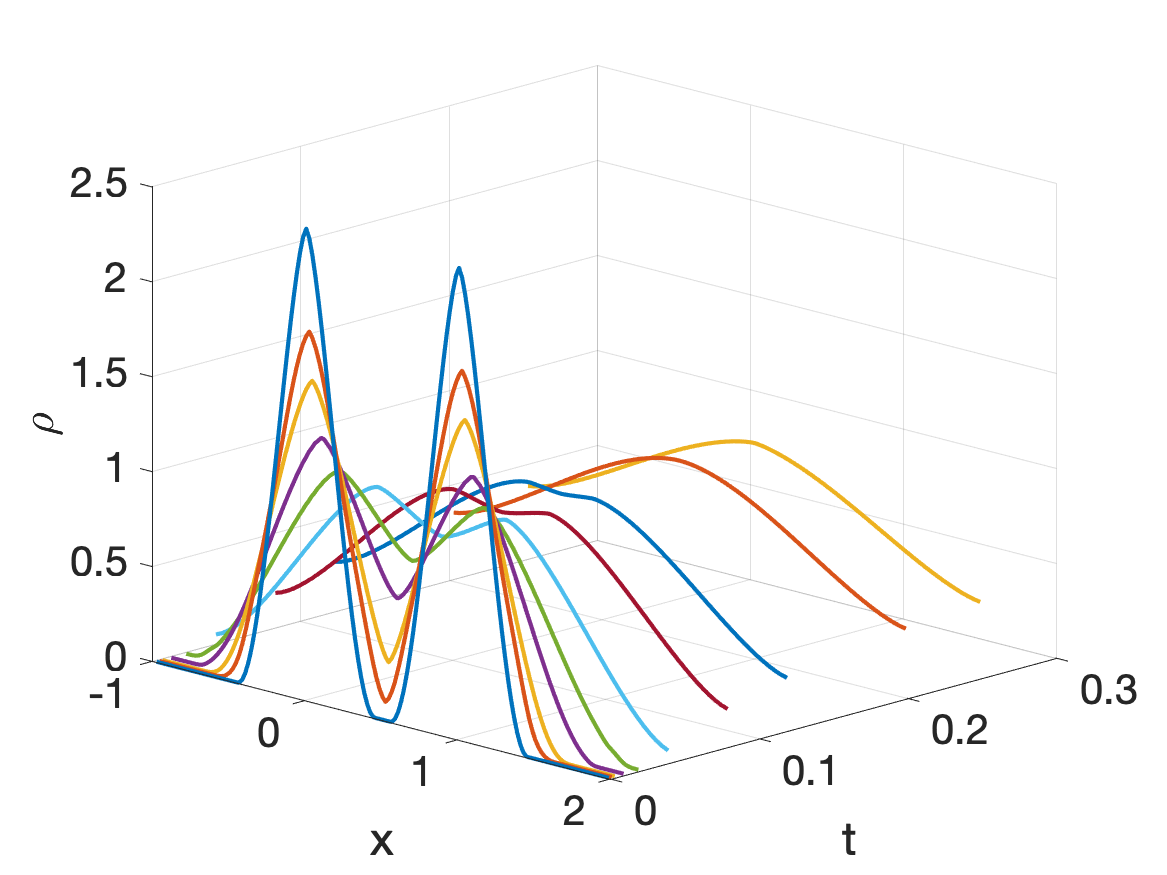}
    \includegraphics[width=0.48\linewidth]{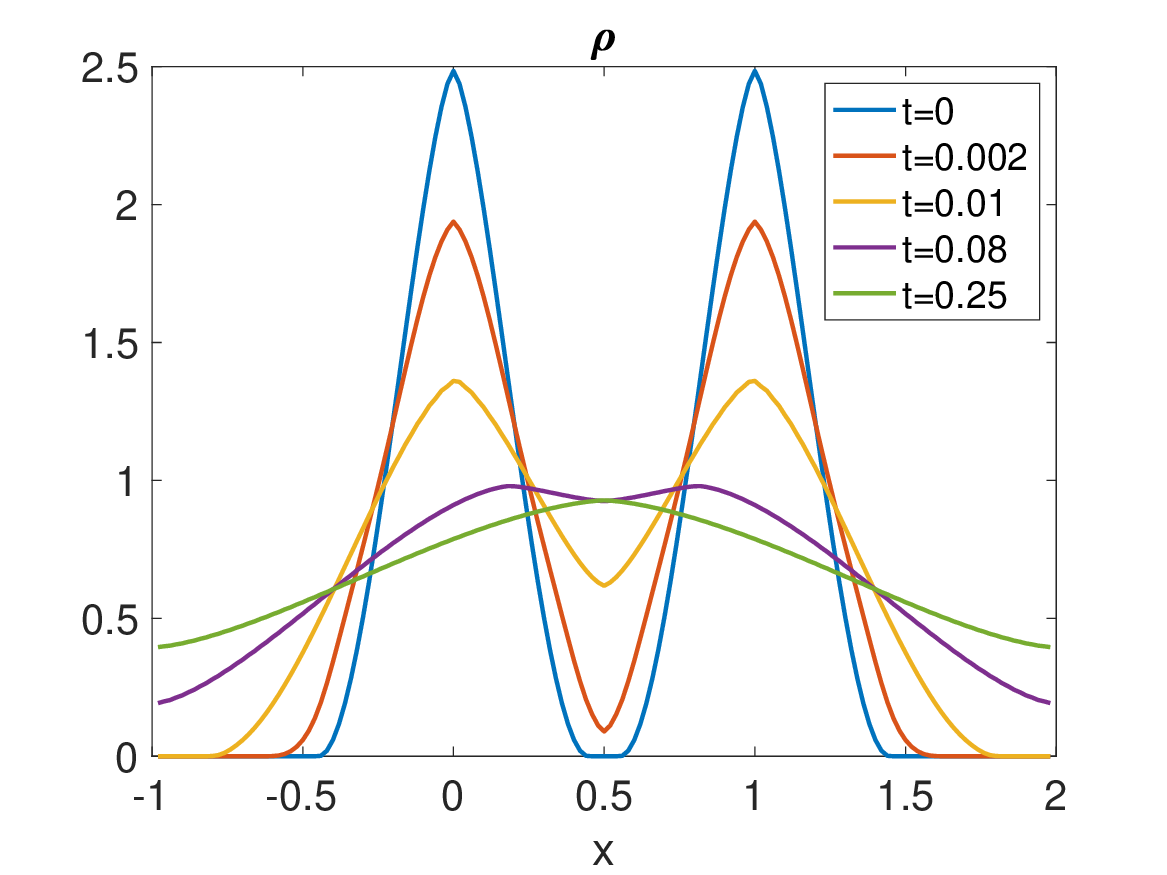}
    \caption{$p$-Laplacian case, two-bump initial data. Here $m = 1$, $p = 3$ (finite speed of propagation). Discretization: $\Delta x = 0.04$, $\Delta t = 0.001$.}
    \label{fig:pLap-2bump-1}
\end{figure}

\begin{figure}[h!]
    \centering
    \includegraphics[width=0.45\linewidth]{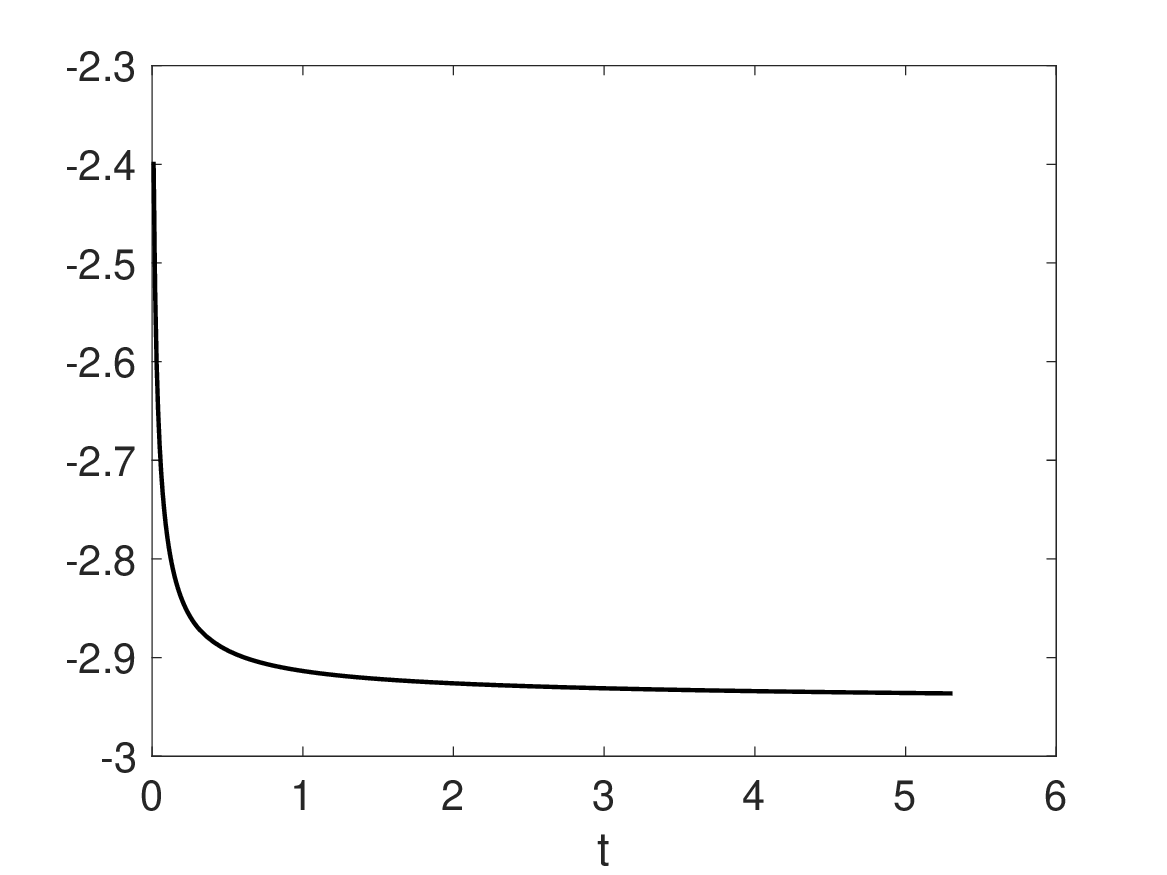}
    \includegraphics[width=0.45\linewidth]{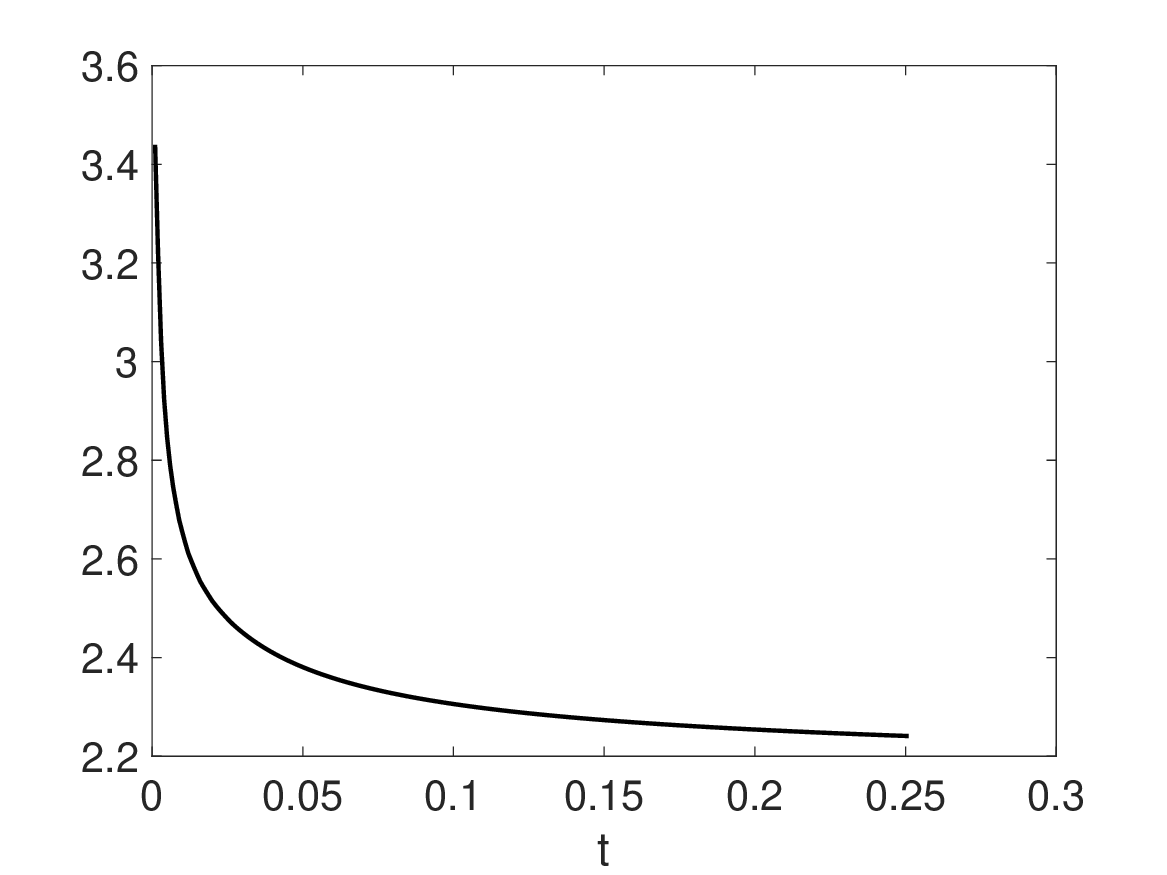}
    \caption{$p$-Laplacian case: entropy vs.\ time, two-bump initial data. Left: $m = 0.25$, $p = 3$, $\Delta x = 0.03$, $\Delta t = 0.01$ up to $t=0.3$ and $\Delta t = 0.05$ thereafter. Right: $m = 1$, $p = 3$, $\Delta x = 0.04$, $\Delta t = 0.001$.}
    \label{entropy_twobump}
\end{figure}

\subsection{Relativistic heat equation}
For the relativistic heat equation with $k,\alpha>0$ no explicit solution is available, so direct validation is not possible. We rely instead on known limiting behaviours as $k\to\infty$ and $\alpha\to\infty$, discussed below. We first consider the initial condition
\begin{equation*}
    \rho_0(x) = \frac{1}{4} \chi_{[-1,1]} + \frac{3}{2\sqrt{2}} \sqrt{\tfrac{1}{2} - |x| \chi_{[-\frac{1}{2}, \frac{1}{2}]}},
\end{equation*}
with results reported in Figure~\ref{case2:rel_H}, compared against the inverse distribution function method of \cite{CCM13}. The qualitative agreement is excellent, as is the expected energy decay. As in the $p$-Laplacian case, Figure~\ref{case2:rel_H} also serves to contrast the joint and separate proximal operator formulations: the Chambolle--Pock iteration counts coincide and the density discrepancies are minimal.

\begin{figure}
    \centering
    \includegraphics[width=\linewidth]{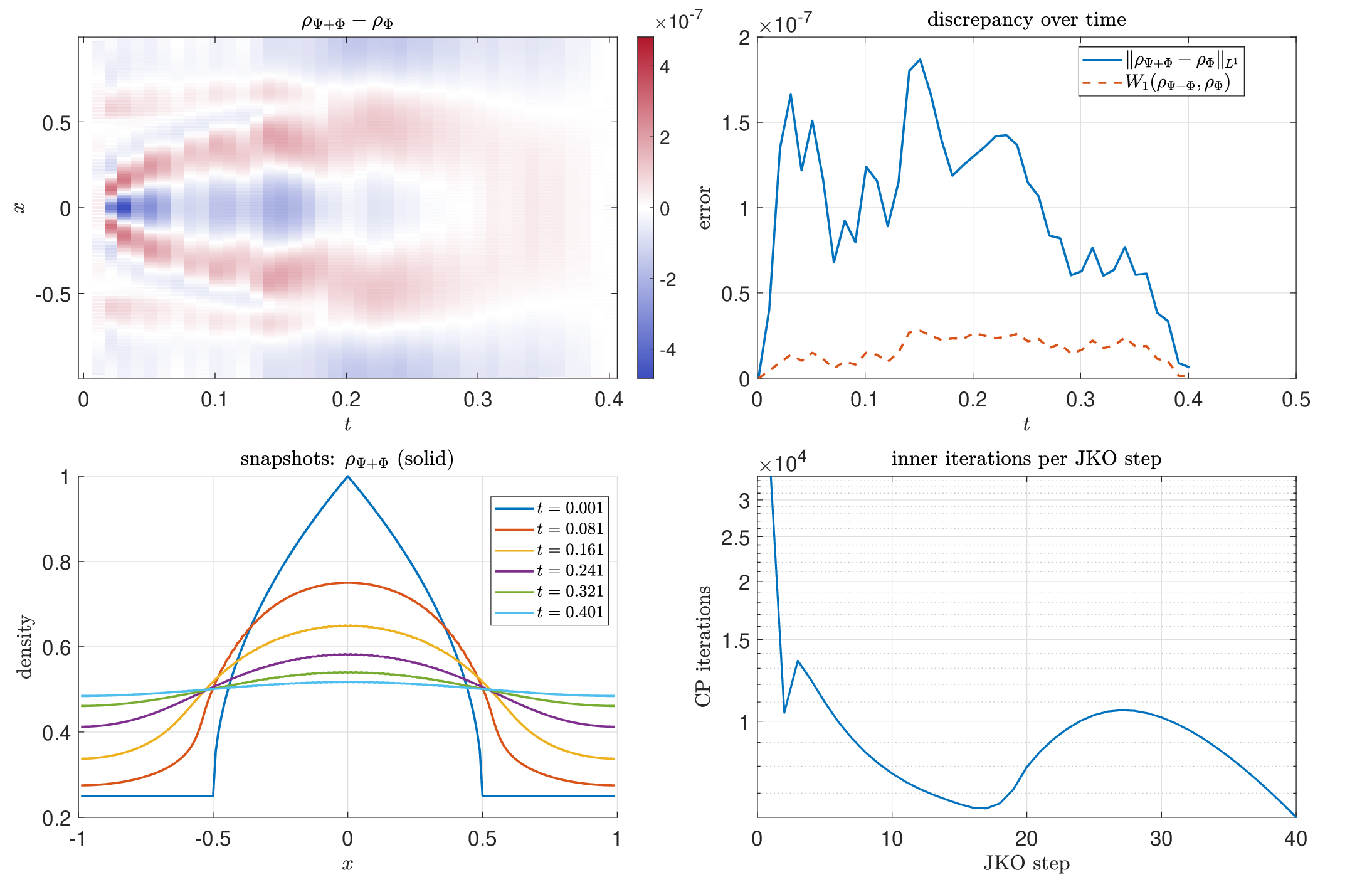}
    \caption{Relativistic heat equation with initial condition $\rho_0(x) = \frac{1}{4} \chi_{[-1,1]} + \frac{3}{2\sqrt{2}} \sqrt{\frac{1}{2} - |x| \chi_{[-\frac{1}{2}, \frac{1}{2}]}}$, $\Delta x = 0.01$, $\Delta t = 0.01$. Comparison of the joint and separate proximal operator formulations: profiles and Chambolle--Pock iteration counts coincide up to minimal discrepancies.}
    \label{case2:rel_H}
\end{figure}

For quantitative verification we consider the rescaled equation
\begin{equation}\label{s-relH}
\derpar{\rho}{t} =
\nabla\cdot\left(\rho\frac{\nabla\rho}{\sqrt{\frac{1}{\alpha^2}\rho^2+\frac{1}{k^2}|\nabla\rho|^2}}\right)=
\nabla\cdot\left(\rho\frac{\nabla\log\rho}{\sqrt{\frac{1}{\alpha^2}+\frac{1}{k^2}|\nabla\log\rho|^2}}\right),
\end{equation}
with $k$ the sound speed. This yields two limiting regimes. For $\alpha = 1$ and $k = 10^5$, equation \eqref{s-relH} approaches the heat equation; we test this with initial data
\begin{align} \label{IC:heat}
    \rho_0(x) = \frac{1}{\sqrt{4\pi t_0}}\, e^{-\frac{x^2}{4t_0}},
\end{align}
for $t_0 = 0.01$, whose exact evolution is obtained by replacing $t_0$ with $t+t_0$. For $k = 1$ and $\alpha = 10^7$, \eqref{s-relH} approaches the total variation flow; we take a smoothed characteristic function,
\begin{align} \label{IC:char}
    \rho_0(x) = 0.499\times\bigl(\tanh(30(x+0.2+t_0))-\tanh(30(x-0.2-t_0))\bigr) + 10^{-5},
\end{align}
with $t_0 = 0.001$, and expect the solution to retain a characteristic profile with boundary expanding at unit speed.

Both cases are collected in Figure~\ref{fig:heat}. The top-left panel shows heat-flow evolution and the bottom-left panel shows front expansion at unit speed. In the total variation limit, the solution diffuses as it expands, owing partly to numerical diffusion and partly to the fact that the initial data is not precisely a characteristic function. A sharper initial profile would require a finer spatial grid; even with the present resolution, some overflow is visible near the propagating front. A high-order spatial discretization would address this, and we leave it for future work. Despite these effects, the entropy decays monotonically, as shown in the second column of Figure~\ref{fig:heat}.

\begin{figure}
    \centering
    \includegraphics[width=0.48\linewidth]{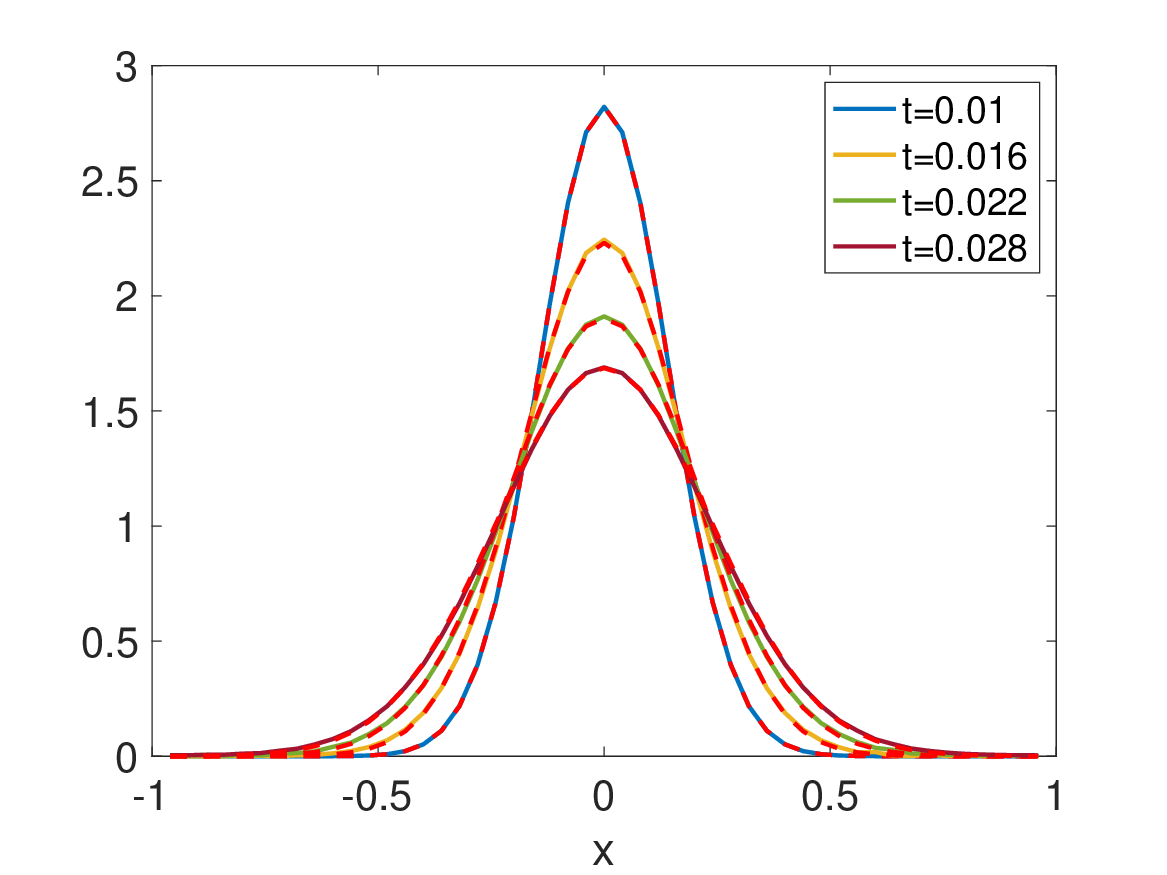}
    \includegraphics[width=0.48\linewidth]{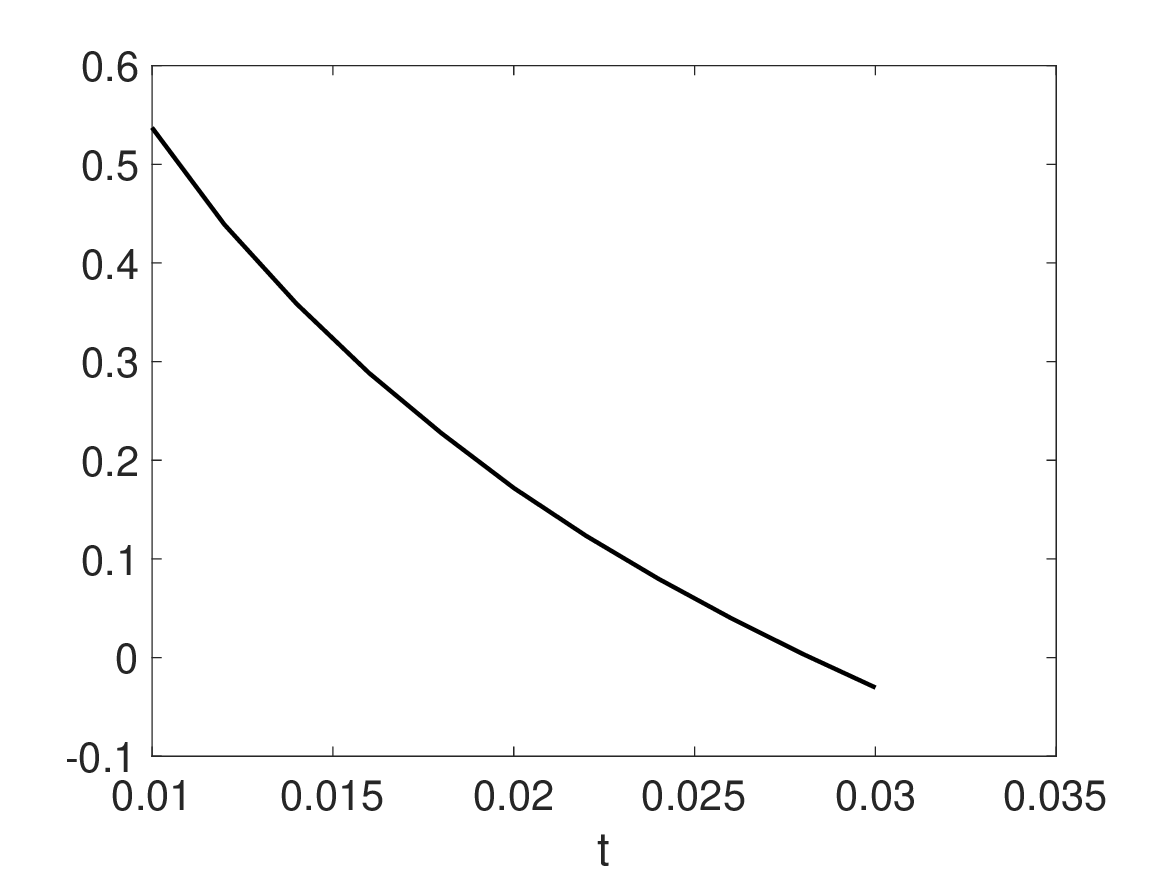}  
    \\ 
    \includegraphics[width=0.48\linewidth]{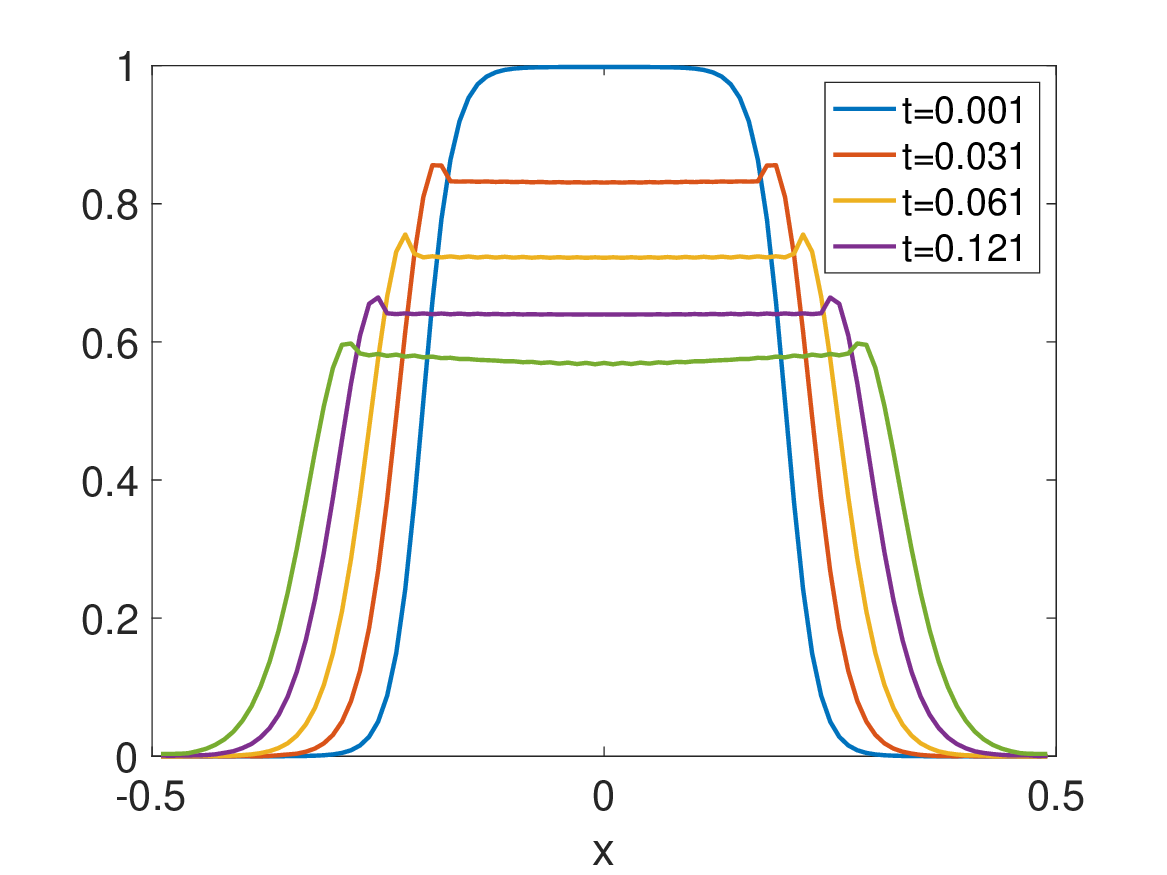}
    \includegraphics[width=0.48\linewidth]{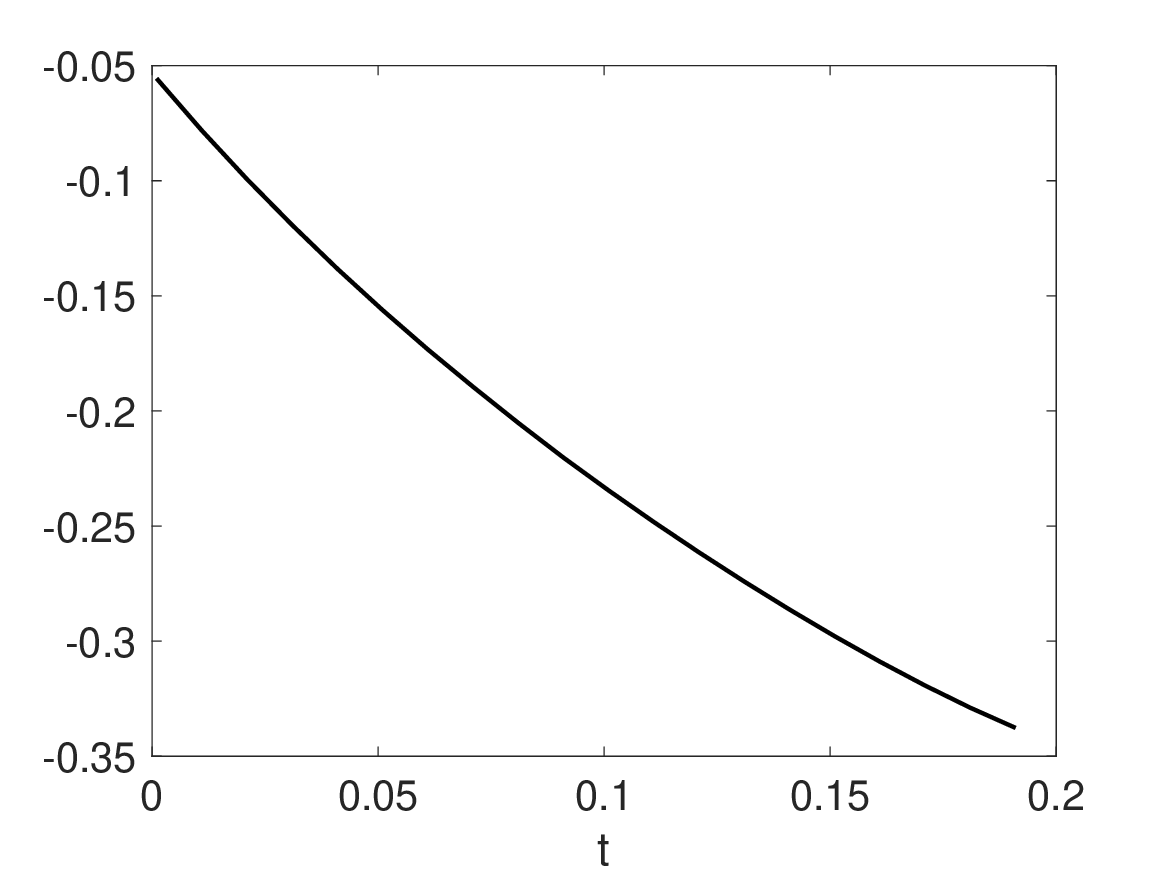}
    \caption{Relativistic heat equation. Top: $\alpha = 1$, $k = 10^5$, initial data \eqref{IC:heat}, $\Delta x = 0.04$, $\Delta t = 0.002$. Bottom: $\alpha = 10^7$, $k=1$, initial data \eqref{IC:char}, $\Delta x = 0.01$, $\Delta t = 0.01$. The second column shows the entropy vs.\ time.}
    \label{fig:heat}
\end{figure}


\section{Discussion and open problems}
In this work we have studied the use of primal-dual proximal operator methods for the numerical approximation of doubly nonlinear parabolic equations via a generalized JKO scheme.
On the numerical analysis side, the only known result is the convergence of a single variational step under smoothness conditions for the Euclidean distance ${\cal W}_2$, proven in \cite[Section 3]{CCWW}. Combining single-step variational convergence with abstract results on $\Gamma$-convergence of gradient flows, such as those in \cite{S11,Mielke}, may lead to convergence theorems for the primal-dual methods of \cite{CCWW,CWW} and of the present work. This remains a challenging open problem. Quantifying the convergence rate of primal-dual methods analytically, beyond what has been explored numerically, is also a very interesting direction, even for the simpler case of linear Fokker-Planck equations.

From a computational viewpoint, it is natural to extend our approach to the family of partial differential equations of the form
\begin{equation}\label{contgenaux}
\derpar{\rho}{t} = \nabla\cdot \left\{
\rho\,\nabla c^*\left[ \nabla \left ( U' \left ( \rho \right ) + V +
W\ast \rho \right ) \right]\right\},
\end{equation}
with general confining potential $V$ and interaction potential $W$. The presence of $W$ increases the numerical cost of solving \eqref{contgenaux} due to its convolution structure, and finding efficient primal-dual algorithms that handle this convolution efficiently would be desirable. The structural setting is similar to the one studied here: equation \eqref{contgenaux} is a nonlinear continuity equation whose velocity field is nonlinearly related to the density itself and given by $-\nabla c^*\left[ \nabla \frac{\delta {\cal F}}{\delta \rho}\right]$, with $\frac{\delta {\cal F}}{\delta \rho}=U' \left ( \rho \right ) + V + W\ast \rho$ and ${\cal F}$ the free-energy functional
\begin{equation} \label{Faux}
{\cal F}(\rho) = \int_{\R^d} U(\rho)\,\dd x + \int_{\R^d}
V(x)\,\rho(x)\,\dd x + \frac12 \int_{\R^d\times\R^d}
W(x-y)\,\rho(x)\,\rho(y)\,\dd x\,\dd y.
\end{equation}
The interaction potential is a natural modelling ingredient, particularly in mathematical biology, where chemotactic fields and cell interactions are often described through such terms. As mentioned earlier, flux-limited versions of the classical Keller-Segel model for chemotaxis have been introduced along these lines \cite{BBNS12,PVW20}, combining both effects.

Finally, the treatment of singular interaction potentials requires care. Splitting schemes for primal-dual proximal methods rely on a Lipschitz gradient assumption on the energy, as in \cite{DavisYin17}, and the proximal operators developed in Propositions~\ref{p:proxPhic} and~\ref{p:proxPhip} assume differentiability of the functional involved. Interaction energies with singular potentials generically fail to have a Lipschitz gradient and are typically non-convex. Exploring, even numerically, how far primal-dual methods can be pushed in this regime is a research avenue of considerable interest.

\section*{Acknowledgements} JAC was supported by the Advanced Grant Nonlocal-CPD (Nonlocal PDEs for Complex Particle Dynamics: Phase Transitions, Patterns and Synchronization) of the European Research Council Executive Agency (ERC) under the European Union Horizon 2020 research and innovation programme (grant agreement No. 883363) and  partially supported by the EPSRC EP/V051121/1. JAC was partially supported by the ``Maria de Maeztu'' Excellence Unit IMAG, reference CEX2020-001105-M, funded by MCIN/AEI/10.13039/501100011033/.

LBA was supported by Centro de Modelamiento Matem\'atico (CMM), FB210005, BASAL fund for centers of excellence, and by grants FONDECYT 1230257 and MATH AmSud 23-MATH-17 from the National Agency of Research and Development (ANID) of Chile.

FJS was partially supported by l'Agence
Nationale de la Recherche (ANR), project ANR-22-CE40-0010, by KAUST through the subaward
agreement ORA-2021-CRG10-4674.6, and by Minist\`ere de l'Europe et des Affaires \'etrang\`eres (MEAE), project MATH AmSud 23-MATH-17.

LW was partially supported by NSF grant DMS-1846854, DMS-2513336, and the Simons Fellowship.
\bibliographystyle{abbrv}
\bibliography{ref.bib}

\end{document}